\newtheorem{definition}{Definition}
\newtheorem{theorem}{Theorem}
\newtheorem{lemma}{Lemma}
\newtheorem{proposition}{Proposition}
\newtheorem{remark}{Remark}
\let\ea\expandafter
\def\foreachLetter#1#2#3{\foreachcount=#1
  \ea\loop\ea\ea\ea#3\@Alph\foreachcount
  \advance\foreachcount by 1
  \ifnum\foreachcount<#2\repeat}
\def\definecal#1{\ea\gdef\csname c#1\endcsname{\ensuremath{\mathcal{#1}}\xspace}}
\let\ea\expandafter
\def\foreachLetter#1#2#3{\foreachcount=#1
  \ea\loop\ea\ea\ea#3\@Alph\foreachcount
  \advance\foreachcount by 1
  \ifnum\foreachcount<#2\repeat}
\def\definecal#1{\ea\gdef\csname b#1\endcsname{\ensuremath{\mathbf{#1}}\xspace}}
\newcommand{\R}{\mathbb{R}}
\newcommand{\Cat}{\mathbf{Cat}}
\newcommand{\TwoCat}{\mathbf{2}\textbf{-}\mathbf{Cat}}
\newcommand{\TwoCatic}{\mathbf{2}\textbf{-}\mathbf{Cat}^\ic}
\newcommand{\Set}{\mathbf{Set}}
\newcommand{\Hom}{\mathsf{Hom}}
\newcommand{\eval}{\mathsf{eval}}
\newcommand{\tw}{\mathsf{tw}}
\newcommand{\Optic}{\mathbf{Optic}}
\newcommand{\TwoOptic}{\mathbf{2}\textbf{-}\mathbf{Optic}}
\newcommand{\CoPara}{\mathbf{CoPara}}
\newcommand{\CartLens}{\mathbf{Lens_{Cart}}}
\newcommand{\ClosedLens}{\mathbf{Lens_{Cl}}}
\newcommand{\Dbl}{\mathbf{Dbl}}
\newcommand{\comp}{\fatsemi}
\newcommand{\id}{\text{id}}
\newcommand{\curry}{\text{curry }}
\newcommand{\op}{\mathsf{op}}
\newcommand{\co}{\mathsf{co}}
\newcommand{\ic}{\mathsf{ic}}
\DeclareDocumentCommand \internalBracket { o m m } {
  \IfNoValueTF {#1} {
    [#2, #3]
  }{
    [#2, #3]_{#1}
  }
}
\newcommand{\colim}{\mathsf{colim}}
\newcommand{\El}{\mathsf{El}}
\newcommand{\discr}{\mathsf{discr}}
\newcommand{\view}{\mathsf{get}}
\newcommand{\upd}{\mathsf{put}}
\newcommand{\get}{\mathsf{get}}
\newcommand{\graph}{\mathsf{graph}}
\newcommand{\fw}{\mathsf{fw}}
\newcommand{\bw}{\mathsf{bw}}
\newcommand{\M}{\mathsf{M}}
\newcommand{\piico}{\pi^\ic_{0^*}}
\newcommand{\prb}[2]{\begin{pmatrix}{#1} \\ {#2} \end{pmatrix}}
\newcommand{\pr}[2]{\begin{matrix}{#1} \\ {#2} \end{matrix}}
\newcommand{\OpticHom}[4]{\Bigg(\pr{#1}{#2} \, , \pr{#3}{#4}\Bigg)}
\title{Space-time tradeoffs of lenses and optics via higher category theory}
\author{Bruno Gavranovi\'c}
\begin{document}
\maketitle
\begin{abstract}
  Optics and lenses are abstract categorical gadgets that model systems with bidirectional data flow.
  In this paper we observe that the denotational definition of optics -- identifying two optics as equivalent by observing their behaviour \emph{from the outside} -- is not suitable for operational, software oriented approaches where optics are not merely observed, but built with their internal setups in mind.
  We identify operational differences between denotationally isomorphic
  categories of cartesian optics and lenses: their different
  composition rule and corresponding space-time tradeoffs, positioning them at two opposite ends of a spectrum.
  With these motivations we lift the existing categorical constructions and their relationships to the 2-categorical level, showing that the relevant operational concerns become visible. 
  We define the 2-category $\TwoOptic(\cC)$ whose 2-cells explicitly optics' internal configuration. We show that the 1-category $\Optic(\cC)$ arises by locally quotienting out the connected components of this 2-category.
We show that the embedding of lenses into cartesian optics gets weakened from a
functor to an oplax functor whose oplaxator now detects the different
composition rule.
We determine the difficulties in showing this functor forms a part of an
adjunction in any of the standard 2-categories.
We establish a conjecture that the well-known isomorphism between cartesian lenses
and optics arises out of the lax 2-adjunction between their double-categorical
counterparts.
In addition to presenting new research, this paper is also meant to be an accessible introduction to the topic.
\end{abstract}

%

	\tableofcontents
	\section{Introduction}

Lenses and optics are have recently received a great deal of attention from the applied category theory community.
They are abstract data structures that model systems exhibiting bidirectional data
flow.
There's a number of disparate places they've been discovered in: deep learning
(\cite{GradientBasedLearning, LensesAndLearners}), game theory
(\cite{DiegeticOpenGames, CompositionalGameTheory}), bayesian learning
(\cite{Bayesianlearning, BayesianOG}), reinforcement
learning (\cite{ValueIterationIsOpticComposition}), database theory
(\cite{FunctorialAggregation}), dynamical systems (\cite{OpenDynamicalSystems}),
data accessors (\cite{ModularDataAccessors}), trading protocols (\cite{Escrows}),
server operations (\cite{LensesForComposableServers}) and more (\cite{LensResources}).

As evident by the breadth of their applications, lenses and optics don't assume
that the underlying systems are of any particular kind.
Instead, they are defined parametrically for some base category $\cC$, which is
only required to satisfy a minimal set of axioms.
By appropriately instantiating this category we can recover various kinds of systems -- deterministic, probabilistic, differentiable, and so on.
This makes it possible to treat the bidirectionality in an abstract way, proving
theorems about whole classes of bidirectional processes that satisfy particular properties.

The two constructions we focus on in this paper -- cartesian lenses and optics\footnote{We note that there is a whole \emph{zoo} of bidirectional gadgets,
  each with their own kind of behavior, and their own abstract interface that
  the underlying world needs to satisfy. A lot of effort has been put in towards
  representing all of these constructions in an unifying way (see any of
  \cite{DependentOptics, FibreOptics, ProfunctorOptics,
    GeneralisedLensCategories})} -- are related, but have different requirements on the base category. 
For optics to be defined, we require the base category to permit parallel
composition of processes, i.e. a monoidal structure. 
For lenses we additionally require this structure to be cartesian, i.e.
the ability to coherently copy and delete information.
These play well together -- defining optics in a cartesian category gives us a
category isomorphic to lenses, as it is well-established in the literature.

In this we paper observe that this isomorphism is denotational in nature and blind
to operational concerns relevant to their practical implementations.
Namely, it treats optics extensionally -- describing them as being observed \emph{from the outside}. This means that any matters of
their internal setup, especially ones relevant to making a distiction between an efficent
and an inefficient implementation, are ignored.
But in a modern, software oriented world we're not merely observing these optics
from the outside in -- we're instead building them from the inside out.
We're choosing their particular internal states, and in most cases we don't have the
luxury of \emph{not} distinguishing between an efficient and an inefficient representation, as often only the former can compute an answer for us.
As the current categorical framework doesn't have a high-enough resolution to
formally capture these distinctions, we seek to provide one. We lift the existing 1-categorical formalism to a 2-categorical one. We show how to track and manipulate the internal state of optics, making a distinction between denotationally equivalent, but
operationally different kinds of optics.


We first start out by unpacking the category of lenses and emphasizing that a seldom talked about perspective: that
lenses are cartesian optics with one option removed: the option to choose the type of its internal state.
We show how this lack of a tangible way to refer to this important notion has
significant operational consequences when composing lenses.
Namely, we'll see that lenses implement a particular kind of a space-time
tradeoff that's in the deep learning literature called \emph{gradient checkpointing}.
We move on to unpacking the category of optics and notice they they have a different composition rule than lenses, implementing a different space-time tradeoff.
What motivates the rest of the paper is the observation that the difference is
completely invisible to the categorical machinery.

We observe that optics are defined by a particular kind of colimit, suggesting
an avenue forward by instead defining them as an \emph{oplax} colimit.
We do so, and thus define the \emph{2-optics}: a 2-category whose 2-cells now explicitly track their internal state.
We show that they coherently reduce to 1-optics by locally computing their
connected components.

We then go on to explore the isomorphism $\CartLens(\cC) \cong \Optic(\cC)$ in
this 2-categorical setting.
We show that the 2-categorical setting now locally hosts an adjunction between
the corresponding categories, and that the embedding of lenses into cartesian optics is upgraded from a functor to an oplax functor whose oplaxator now detects the different
composition rule.
We determine the difficulties in showing the oplax functor forms in any of the
standard categories whose 1-cells are lax functors.
We establish a conjecture that the well-known isomorphism between cartesian lenses
and optics arises out of the lax 2-adjunction between their double-categorical
counterparts, as an image under the local connected components quotient.

\subsection{Acknowledgements.}

We thank Igor Bakovi\'c, Fosco Loregian, Mario Rom\'an, Matteo Capucci and Jules Hedges for helpful conversations.

\subsection{Notation.}

We write morphisms in diagrammatic order, so composition of $A \xrightarrow{f}
B$ and $B \xrightarrow{g} C$ is written as $A \xrightarrow{f \comp g} C$. We write $\pi_2 : A \times B \to B$ for the projection in the 2nd component.

	\section{Cartesian Lenses}
\label{sec:cart_lenses}

Lenses come in many shapes and sizes.
In this paper, we tell the story from the point of view of \emph{cartesian}\footnote{As opposed to \emph{closed}.} lenses (\cite{BimorphicLenses, CategoriesOfOptics, ProfunctorOptics}).
This is a category which we denote by $\CartLens(\cC)$.
We proceed to unpack its contents and, as most of the content of this paper is motivated by its previously unnoticed operational aspects, we take special care in doing so.
We look out for any potentially resource-relevant operations such as data copying or recomputation.

$\CartLens(\cC)$ can be defined for any base category $\cC$ which is cartesian monoidal.
Its objects are pairs of objects in $\cC$, denoted by $\prb{A}{A'}$, where we interpret the value of type $A$ as going \emph{forward} and value of type
$A'$ as going \emph{backward}.



A morphism $\prb{A}{A'} \to \prb{B}{B'}$ is a \emph{cartesian lens}.
It consists of a two morphisms in $\cC$, $\view : A \to B$ and $\upd : A \times
B' \to A'$, roughly thought of as the \emph{forward} and the \emph{backward}
part of a lens.\footnote{Sometimes the terminology $\mathsf{view}$ and $\mathsf{update}$ is used instead of $\view$ and $\upd$.}
We can visualise lenses graphically using the formalism of \emph{string diagrams}
\cite{Selinger} (Figure \ref{fig:lens}), an especially useful visual language for studying the flow of information in a lens.

\begin{figure}[H]
  \centering
  \includegraphics[width=.7\textwidth]{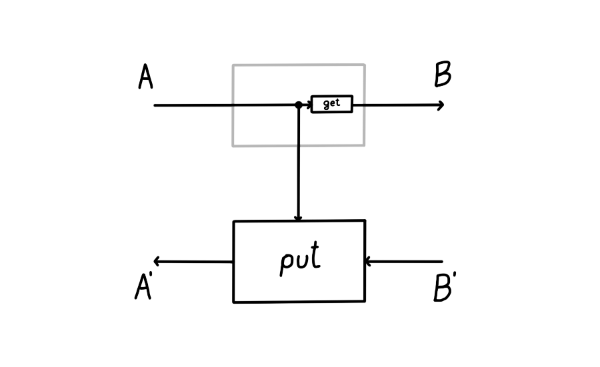}
  \caption{Graphical depiction of a lens. This particular kind
    of a string diagram arises out of string diagrams for optics (\cite{StringDiagramsForOptics}).}
  \label{fig:lens}
\end{figure}
The flow of information works as follows. Information starts at the input
of type $A$. The lens takes in this input and produces two things: a copy of it
(sent down the vertical wire, where the operation of copying is drawn as a black
dot) and the output $B$ (via the $\view$) map. This is the \emph{forward pass} of the lens, and is drawn with the gray outline.
Then, the environment takes this output $B$ and turns it into a response $B'$
(not drawn).
This lands us in the \emph{backward pass} of the lens. Here the lens via the
$\upd$ map consumes two things: the response $B'$ and the previously saved
copy of the input on the vertical wire, turning them back into $A'$.

A lens has an inside and an outside. The outside are the ports $(A, A')$ and
$(B, B')$. These ports are the interface to which other lenses connect.
The inside is the vertical wire whose type is $A$. The vertical wire is the
internal state of the lens (sometimes also called \emph{the residual}) --
mediating the transition between the forward and the backward pass.

\begin{figure}[H]
  \centering
  \includegraphics[width=.7\textwidth]{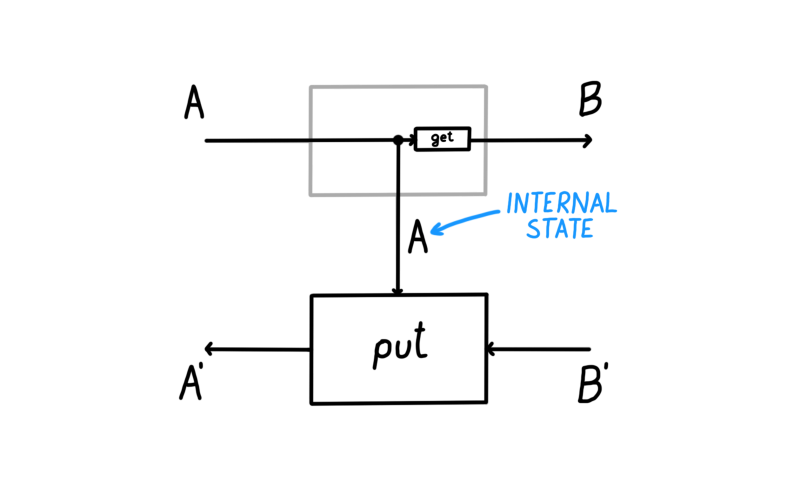}
  \caption{The internal state of the lens}
  \label{fig:lens_residual}
\end{figure}

Here we are explicitly referring to the internal state because of what's to come,
but we emphasize that in the lens literature this concept hasn't been reified.
In the lens literature the internal state is not explicit data that can be
manipulated, and is instead being implicitly threaded through definitions and
theorems -- always being pegged to the forward part of the domain of a lens.
More precisely,
  \textbf{the type of the internal state of a lens $\prb{A}{A'} \to \prb{B}{B'}$
    is always equal to $A$.}
In what follows, we will see how this lack of a tangible way to refer to this
important notion has significant operational consequences when composing lenses.

\begin{center}
  \label{motto}
  "The simplicity of the presentation of lenses is balanced by the complexity of
  their composition."
\end{center}

Armed with the above motto we proceed to unpack the definition of lens composition.
Suppose we have two lenses: $(A, A') \xrightarrow{(\view_1,
  \upd_1)} (B, B')$ and $(B, B') \xrightarrow{(\view_2, \upd_2)} (C,
C')$, as drawn below in Figure \ref{fig:lens_precomposed}.

\begin{figure}[H]
  \centering
  \includegraphics[width=.9\textwidth]{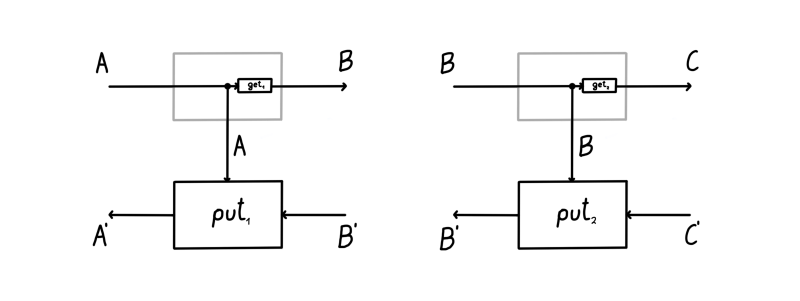}
  \caption{Two composable lenses.}
  \label{fig:lens_precomposed}
\end{figure}

Using the grapical formalism of the figure above, it seems reasonable
to define the composite of these two lenses simply by plugging them along the two
matching ports $\prb{B}{B'}$. We draw the result of this in Figure \ref{fig:optic_composed}.
We invite the reader to ponder this definition before moving on.
Is this composition well-defined?

\begin{figure}[H]
  \centering
  \includegraphics[width=.9\textwidth]{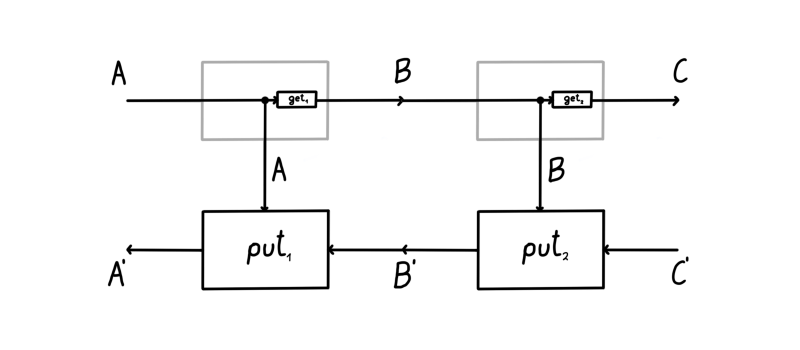}
  \caption{First guess at a possible lens composition.}
  \label{fig:optic_composed}
\end{figure}

The answer is no! What is drawn above is not a lens. It turns out that this elegant and
plausible looking solution has an issue. Namely, if we look at the figure, we see that the internal state of this supposed lens
$\prb{A}{A'} \to \prb{C}{C'}$ is $A \times B$.
But we've previously established that the type of the internal state of every lens with
domain $\prb{A}{A'}$ is pegged to $A$ itself, as internal state is not data available for manipulation.
This means that what we've defined above is \emph{some kind} of a bidirectional
process, but not a lens.
Another way to see this is to try to write down the $\view$ and $\upd$ maps explicitly.
Below we explicitly do so -- we write out the \emph{correct} definition of lens composition (forgetting the above image for a moment).

\begin{definition}[Lens composition]
  Consider two lenses:\\
  \begin{center}
    $\prb{A}{A'} \xrightarrow{\prb{\view_1}{\upd_1}} \prb{B}{B'}$ and
    $\prb{B}{B'} \xrightarrow{\prb{\view_2}{\upd_2}} \prb{C}{C'}$.
  \end{center}
  Their composite $\prb{A}{A'} \xrightarrow{\prb{\view}{\upd}} \prb{C}{C'}$ is
  defined as:

  \begin{align*}
    \view &\coloneqq A \xrightarrow{\view_1} B \xrightarrow{\view_2} C \\
    \upd &\coloneqq A \times C' \xrightarrow{\graph(\view_1) \times C'} A \times B \times C' \xrightarrow{A \times \upd_2} A \times B' \xrightarrow{\upd_1} A'
  \end{align*}
\end{definition}

While the definiton of the composite $\view$ is simple, the composite $\upd$
is more complex.\footnote{We observe that $\upd$ defines a generalised form of
  chain rule \cite[p. 11]{GradientBasedLearning}} If we look at at $\upd$, we see first see $\graph(f)$ applied
to $A$, which copies the input and applies $f$ to one of
the copies.\footnote{We write out the formal definition of $\graph(f)$ in Def.
  \ref{def:graph}.}
That copy results in a $B$, which is used in $\upd_2$.
The map $\upd_2$ gives us a $B'$ which is used together with the other copy of $A$ to
obtain an $A'$ using $\upd_1$.
Observe that this is the only way lens composition can be defined. \footnote{This
  can be seen by the reasoning going \emph{backwards}: "the only thing that we can use to produce $A'$
  is $\upd_1$, and the only way to produce its inputs is by..."},
Lens composition is shown graphically in Figure \ref{fig:lens_composed}.

\begin{figure}[H]
  \centering
  \includegraphics[width=.9\textwidth]{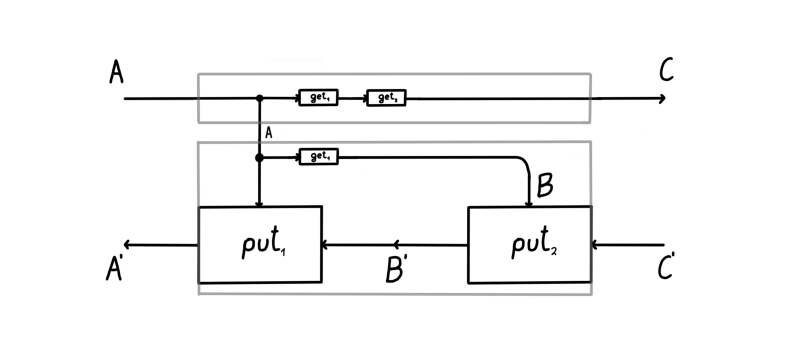}
  \caption{Composition of two lenses}
  \label{fig:lens_composed}
\end{figure}

We can immediately observe that this is different than our original guess: 1) there are two $\view_1$ maps, and 2) the input $A$ is copied twice, not once.
With the hindsight that we're interested in implementing these lenses in
software, the fact that some functions are computed twice raises some suspicions
about the feasibility such an implementation.
To get a better sense of what's going on, we up the stakes and depict a
composition of \emph{three} lenses in Figure \ref{fig:lens_composed_three}.

\begin{figure}[H]
  \centering
  \includegraphics[width=.9\textwidth]{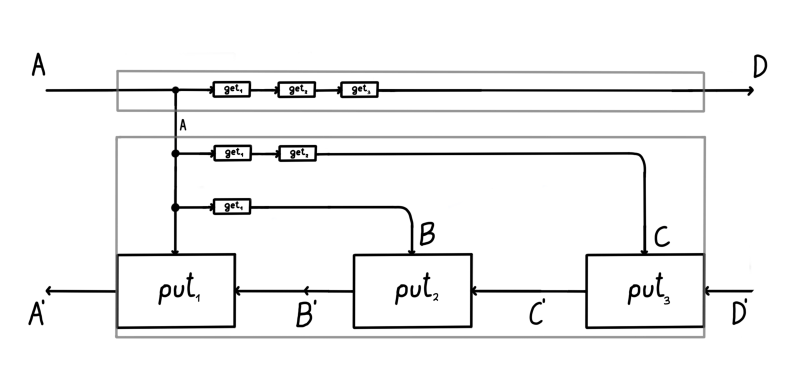}
  \caption{Composition of three lenses.}
  \label{fig:lens_composed_three}
\end{figure}

At this point things start to look crowded. There are now 6 $\view$ maps.
We are also copying $A$ three times in total. In general, it seems that
composing more lenses only exacerbates the problem. What is going on?

If we look closely, we see that the backward pass of this composite for
each $\{\upd_i\}_{i=1}^3$ map \emph{independently} computes from scratch what that  map needs.
For instance, $\upd_3$ uses $\view_1 \comp \view_2$ to compute its internal state, and $\upd_2$ uses $\view_1$, while $\upd_1$ uses just the already available $A$,
but none of these computations share results of computation with each
other.

This strategy of recomputing every intermediate result from scratch might
certainly seem disadvantageous, but we observe that it's a part of a tradeoff:
 this strategy uses less memory.
Only the initial state $A$ needs to be preserved in memory, and everything needed
from the backward pass can be computed from it. It is also never the case that
both $\view_1 \comp \view_2$ and $\view_1$ in the backward pass of Figure
\ref{fig:lens_composed_three} need to be computed in parallel the same time
(which would require more memory): it's necessary to compute the output of the former
before the output of the latter can be used.\footnote{To help with intuition, we invite the reader to have a look at the \emph{animation} of this process, available at the
  \href{https://twitter.com/bgavran3/status/1563628021692743680}{following
    link}.}

This means that lens composition picks a particular space-time tradeoff when
solving the issue of backpropagating information.
It uses \emph{less space} (as it doesn't need to save intermediate states of computation in memory), but \emph{more time} (as it needs to recompute data).

\begin{remark}
  This kind of space-time tradeoff has a name in the deep learning and
  automatic differentation community: it's called \emph{gradient
    checkpointing}\cite{Checkpointing, SublinearCost}. It is often used with
  very large neural networks where storing all the intermediate results is
  prohibitive memory wise, or when available computation resources
  are constrained memory-wise.
  While it is understood that lenses are intricately tied to the chain rule, to the best of our knowledge this is the first time the connection between lenses and gradient checkpointing has been established.
\end{remark}

The explanation of why the structure of lenses ended up implementing this particular tradeoff can be seen in Figure \ref{fig:bottleneck}, showing a composite of two lenses.
Here we see the residual $A$ circled in blue mediating the passage from the forward pass to the backward pass.
Observe that all the data communicated between the forward and the backward pass has to be squeezed through this $A$-shaped hole.

\begin{figure}[H]
  \centering
  \includegraphics[width=.9\textwidth]{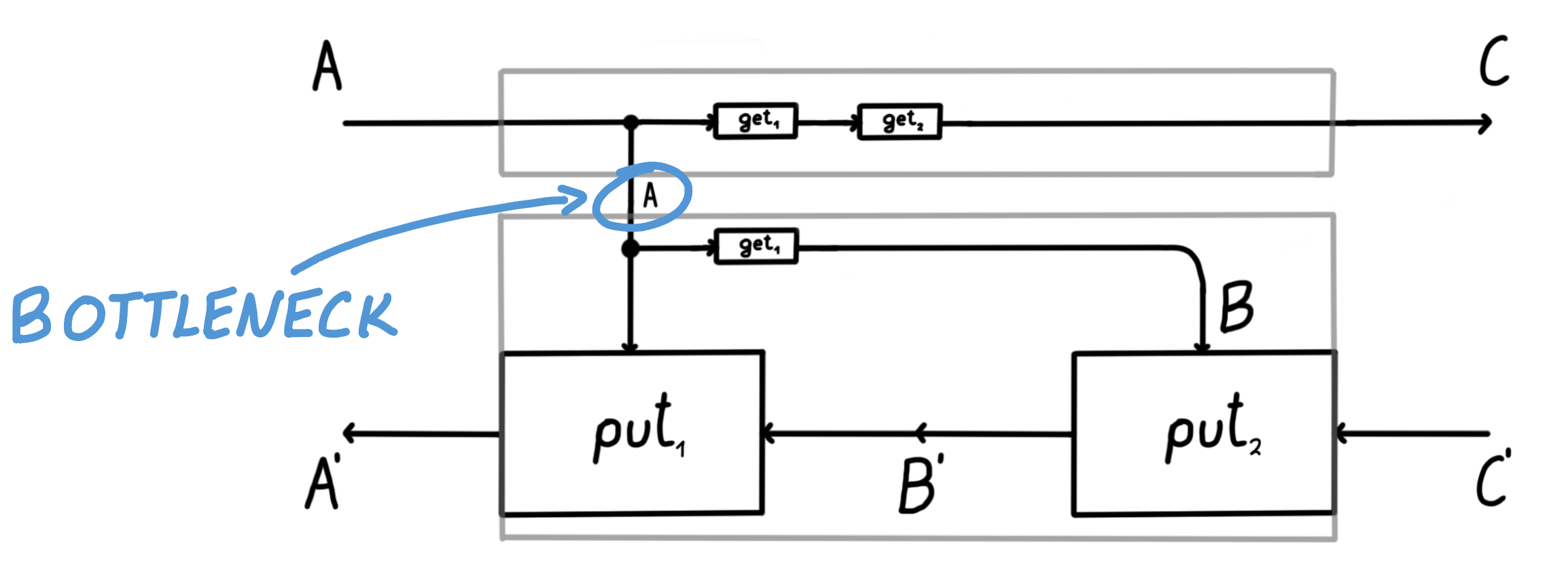}
  \caption{There is a bottleneck in every lens $\prb{A}{A'} \to \prb{B}{B'}$, necessitating that all the data required for the backward pass is sequeezed through an $A$-shaped hole.}
  \label{fig:bottleneck}
\end{figure}

This means that intermediate state of type $B$ that $\get_1$ computed in the
forward pass can't be communicated to the backward pass.
Instead, there is no other way, but for the backward pass to separately
recompute this information from $A$.
\footnote{If we have a composite of 3, 4, or in general $n$ more lenses, then there is $n
- 1$ levels of separate computation, and each level $i$ is has a sequence of
$\get$ maps of length $i$ composed. The memory required to compute gradients is
in our graph is constant in the number of layers $n$, but the number of node evaluations scales with $n^2$.} 
And this itself arises precisely because when in defining a lens we have no freedom to
choose the type of data that will be communicated from the forward pass to the
backward pass. 

\begin{remark}
This phenomenon seems to have first been observed in \cite[Section
3.1.]{SimpleAD} where the author described their initial attempts to efficiently
compute reverse-mode derivatives with cartesian lenses, only to identify the
aforementioned redundancy problems. He went on to propose a solution using
\emph{closed} lenses, something we touch upon in Remark \ref{rem:closed_lax}.
Interestingly, the author never used the term \emph{lens} in the entire paper.
\end{remark}

\subsection{Where to?}

We've established that lens composition implements a generalised form of chain
rule in a manner that uses less space but more time. This is a result of the absence of an explicit way to refer to the internal state of lenses.
Two questions now become natural to ask: a) How can we recover other space-time
tradeoffs? and b) How can we explicitly refer to and manipulate this internal state?
In next section we answer both of these questions with \emph{optics}.


	\section{Optics}

The category of optics is a generalisation of the category of lenses, and has
been thoroughly studied in the literature \cite{ProfunctorOptics,
  CategoriesOfOptics, StringDiagramsForOptics, ModularDataAccessors}.
Unlike lenses, optics do not require a cartesian structure and can instead be
defined for any base category that is merely monoidal.\footnote{This is not the
  most general definition of optics, see \cite{DependentOptics, FibreOptics, ProfunctorOptics}.}
We denote this category by $\Optic(\cC)$ and proceed to unpack its contents.
Its objects of are pairs of objects in $\cC$, just like with
lenses. However, differences start appearing once we start looking at the morphisms.

\begin{definition}[{\cite[Def. 2.0.1.]{CategoriesOfOptics}}]
  \label{def:homset_optic}
  The set of optics $(A, A') \to (B, B')$ is defined as the following coend
  \[
    \Optic(\cC)\OpticHom{A}{A'}{B}{B'} \coloneqq \int^{M : \cC} \cC(A, M \otimes B)
    \times \cC(M \otimes B', A')
  \]

  Its elements are equivalence classes of triples $(M, f, f')$, where $M
  : \cC$, $f : A \to M \otimes B$ and $f' : M \otimes B' \to A'$. They're
  quotiented out by the equivalence relation where $(M, f, f') \sim (N, g, g')$
  if there is a residual morphism $r : M \to N$ in $\cC$ such that the following diagrams commute:
\begin{equation}
  \label{eq:optic_equiv_diagrams}
  \begin{tikzcd}[row sep=3ex, column sep=4ex]
      A && {M \otimes B} && {M \otimes B'} && {A'} \\
      \\
      && {N \otimes B} && {N \otimes B'}
      \arrow["f", from=1-1, to=1-3]
      \arrow["{r \otimes B}", from=1-3, to=3-3]
      \arrow["g"', from=1-1, to=3-3]
      \arrow["{f^\sharp}", from=1-5, to=1-7]
      \arrow["{g^\sharp}"', from=3-5, to=1-7]
      \arrow["{r \otimes B'}"', from=1-5, to=3-5]
    \end{tikzcd}
\end{equation}
\end{definition}

This definition might look daunting, but it is the result the dualisation of Motto
\ref{motto} whose consequence will be a more straightforward definition of
composition.
Nonetheless, we will see that each part of the definition has intuitive meaning.
An optic $(\M, \fw, \bw)$ has three components.
The object $\M$, the type of the internal state, the forward map $\fw$, and the
backward map $\bw$.
The shape of an optic is drawn in Figure \ref{fig:optic_residual}, and it has a similar data flow as a lens.
It takes in some $A$ in the forward pass, and using the the map $\fw$ it
produces the product $M \otimes B$, for the chosen type $M$.
The environment then takes in the $B$ and responds with a $B'$, allowing the
backward part to use $M \otimes B'$ and produce $A'$.

\begin{figure}[H]
  \centering
  \includegraphics[width=.6\textwidth]{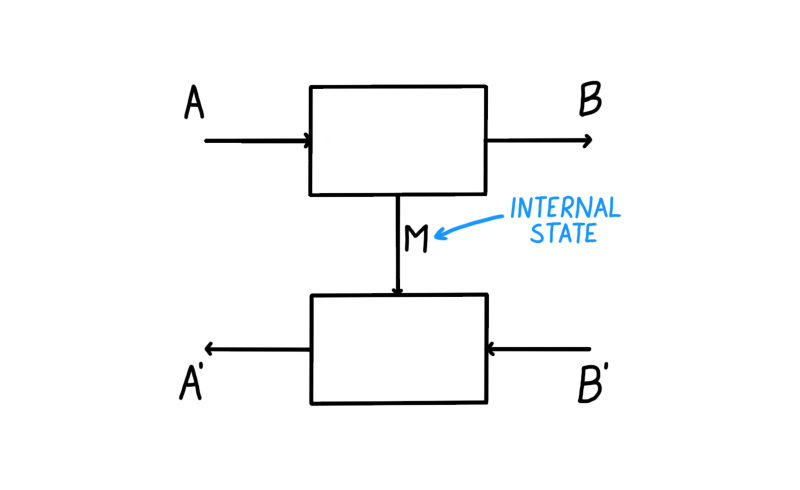}
  \caption{In defining an optic we have the freedom to choose the type of
    internal state, a liberty not available with lenses.}
  \label{fig:optic_residual}
\end{figure}

The last component of the definition is the equivalence relation.
It's a formal description of the idea that we think of optics sa being \emph{observed from the outside}.
This means that the type of the internal state of an optic isn't externally available information.
This in turn means that there is no way to distinguish between two optics that
have the same extensional behavior\footnote{In other words, the only way it's
  possible to distinguish between two optics is if there is some
  input $A$ and some environment response $B \to B'$ such that two optics
  produce a different $A'$.}, but different types of internal states.

\begin{remark}\label{rem:eq_symmetric}
  The directionality of the residual morphism in the equivalence relation does not matter.
  Because equivalence relations are symmetric, given $(M, f, f')$
and $(N, g, g')$, both morphisms $r : M \to N$ and $r' : N \to M$
(making the appropriate diagrams commute) induce an equivalence relation $(M, f,
f') \sim (N, g, g')$.
\end{remark}

Now we move on to describing the relationship between lenses and optics. 

\subsection{Cartesian Lens - Cartesian Optic isomorphism}
\label{subsec:lens_optic_iso}

If the base $\cC$ of optics is \emph{cartesian} monoidal, the resulting
cartesian optics are isomorphic to cartesian lenses.

\begin{proposition}
  \label{prop:lenseqoptic}
  When $\cC$ is cartesian monoidal, we have $\CartLens(\cC) \simeq \Optic(\cC)$.
\end{proposition}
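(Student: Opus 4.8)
The plan is to exhibit an identity-on-objects isomorphism of categories; since both $\CartLens(\cC)$ and $\Optic(\cC)$ have pairs $\prb{A}{A'}$ as objects, everything reduces to a natural bijection of hom-sets. Throughout I use that $\otimes = \times$ because $\cC$ is cartesian. I would define the comparison functor $L : \CartLens(\cC) \to \Optic(\cC)$ to be the identity on objects and to send a lens $(\view, \upd) : \prb{A}{A'} \to \prb{B}{B'}$ to the optic represented by the triple $(\M, \fw, \bw) = (A,\, \graph(\view),\, \upd)$, where $\graph(\view) = \langle \id_A, \view \rangle : A \to A \times B$. The entire statement then amounts to showing that, in the cartesian setting, this one canonical residual --- the object $A$ itself --- represents every class in the coend of Definition \ref{def:homset_optic}.

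First I would build the candidate inverse on hom-sets. Given a representative $(M, f, f')$ of an optic $\OpticHom{A}{A'}{B}{B'}$, the cartesian structure splits the forward leg as $f = \langle f_M, f \comp \pi_2\rangle$ with $f_M \coloneqq f \comp \pi_M : A \to M$, and I send $(M,f,f')$ to the lens with $\view \coloneqq f \comp \pi_2$ and $\upd \coloneqq (f_M \times B') \comp f'$. The crux is well-definedness: I must check this is constant on $\sim$-classes. If $(M,f,f') \sim (N,g,g')$ via a residual $r : M \to N$, the left diagram of \eqref{eq:optic_equiv_diagrams} yields both $g \comp \pi_2 = f \comp \pi_2$ and $g \comp \pi_N = f_M \comp r$, while the right diagram yields $f' = (r \times B') \comp g'$; substituting these into the formula for $\upd$ shows the two lenses agree. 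This collapse of the coend onto the single residual $A$ is the heart of the proof and is exactly where cartesianness is essential: it supplies both the canonical factorisation of $f$ through the product and the residual morphism $f_M$ needed below.

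Next I would check the two assignments are mutually inverse. Sending a lens $(\view,\upd)$ through $L$ and back recovers $f \comp \pi_2 = \view$ and $f_M = \id_A$, hence $\upd$ unchanged. Sending an optic $(M,f,f')$ to a lens and back produces the triple $(A, \graph(\view), \upd)$, and I would show it is $\sim$-equivalent to $(M,f,f')$ using precisely the residual $r = f_M : A \to M$: with this $r$ the left diagram of \eqref{eq:optic_equiv_diagrams} reads $\langle f_M, f\comp\pi_2\rangle = f$ and the right diagram reads $\upd = (f_M \times B') \comp f'$, both holding by construction.

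The step I expect to be the main obstacle is functoriality, i.e.\ that $L$ preserves identities and composition. Identities are routine: the lens identity on $\prb{A}{A'}$ maps to the residual-$A$ optic, which is $\sim$-equivalent to the unit-residual identity optic via the terminal map $! : A \to 1$. Composition is the delicate part, because translating a composite optic back to a lens must reproduce the recomputing composition of the earlier lens-composition definition. Concretely, I would take $L(\view_1,\upd_1) = (A, \graph(\view_1), \upd_1)$ and $L(\view_2,\upd_2) = (B, \graph(\view_2), \upd_2)$, form their optic composite (residual $A \times B$, forward $\graph(\view_1) \comp (A \times \graph(\view_2))$, backward $(A \times \upd_2) \comp \upd_1$), and apply the inverse map. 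Its forward-then-$\pi_2$ component computes to $\view_1 \comp \view_2$, its projection onto the residual $A \times B$ computes to $\graph(\view_1)$, and hence $\upd = (\graph(\view_1) \times C') \comp (A \times \upd_2) \comp \upd_1$ --- exactly the composite $\upd$ of the lens-composition definition. This last identification is where the cartesian copy maps bundled inside $\graph$ materialise the characteristic duplication, so matching it against the stated lens composition is the real work; once it is verified, $L$ is identity-on-objects and fully faithful, hence an isomorphism of categories, and the claimed equivalence follows.
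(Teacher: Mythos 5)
Your proposal is correct and matches the paper's own treatment: your comparison maps are exactly the residual reifier $R$ and eraser $E$ of Propositions \ref{prop:lens_as_optic} and \ref{prop:optic_as_lens}, and your witness $r = f\comp\pi_1$ for the round trip on optics is precisely the residual morphism singled out in Remark \ref{rem:residual_morphism_nontrivial}, while your composition check reproduces the $\graph(\view_1)$ reparameterisation discussed around Theorem \ref{thm:oplax_embedding}. The only difference is that the paper defers the formal proof to the cited ``slick'' coend-calculus argument, whereas you carry out the elementary hom-set verification in full; both are sound.
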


Unlike the slick proof of this proposition ({\cite[Prop. 2.0.4.]{CategoriesOfOptics}},
which we refer the reader to), here we take special care in unpacking the
non-trivial action of this isomorphism on hom-sets.
We first see how turning a lens into an optic reifies the internal state,
allowing us to explicitly track and manipulate it.
The reason we do this is Remark \ref{rem:residual_morphism_nontrivial} which we
will see is a shadow of a higher-categorical construction we will see in Section \ref{sec:two_optics}.

\begin{proposition}[Cartesian lenses $\to$ cartesian optics]
  \label{prop:lens_as_optic}
  For a cartesian category $\cC$, and every pair of objects $\prb{A}{A'}$ and
  $\prb{B}{B'}$ there is a function $R$ reifying the residual of a lens defined as
  \begin{align*}
    \CartLens(\cC)\OpticHom{A}{A'}{B}{B'} &\xrightarrow{R} \Optic(\cC)\OpticHom{A}{A'}{B}{B'}\\
              \prb{f}{f'} &\mapsto (A, \graph(f), f')
  \end{align*}
\end{proposition}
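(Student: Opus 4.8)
The plan is to verify that the rule $\prb{f}{f'} \mapsto (A, \graph(f), f')$ genuinely specifies a function from the lens hom-set to the optic hom-set. Since the target $\Optic(\cC)\OpticHom{A}{A'}{B}{B'}$ is defined in Definition~\ref{def:homset_optic} as a coend, this reduces to two checks: that the designated triple is a legitimate representative of an element of that coend, and that the assignment respects whatever identifications are present on the source.

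First I would unpack both sides explicitly. By the description of $\CartLens(\cC)$ in Section~\ref{sec:cart_lenses}, an element of the source is literally a pair of $\cC$-morphisms, namely the forward part $f = \view : A \to B$ and the backward part $f' = \upd : A \times B' \to A'$; crucially, this hom-set carries no quotient, so it is an honest set of pairs. By Definition~\ref{def:homset_optic}, an element of the target is an equivalence class of triples $(M, g, g')$ with $g : A \to M \otimes B$ and $g' : M \otimes B' \to A'$.

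Next I would invoke the cartesian structure of $\cC$ -- precisely the hypothesis that separates this proposition from the merely monoidal setting -- to form the graph $\graph(f) = \langle \id_A, f \rangle : A \to A \times B$ (Def.~\ref{def:graph}), which is the copy-then-$f$ map. Because $\cC$ is cartesian monoidal we have $\otimes = \times$, so setting $M \coloneqq A$ the types line up on the nose: $\graph(f) : A \to A \otimes B$ serves as the forward leg, and $f' : A \otimes B' \to A'$ serves as the backward leg. Hence $(A, \graph(f), f')$ is a bona fide representative of an element of the coend, and $R$ is taken to send the lens to its equivalence class.

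The only remaining point is well-definedness of the function itself, and this is where one must be careful rather than where any genuine difficulty lies. Because the source hom-set is not quotiented, there is no source equivalence relation with which to demand compatibility; and because a lens $\prb{f}{f'}$ determines the triple $(A, \graph(f), f')$ uniquely, the representative we land on is canonical rather than chosen, so the target quotient causes no ambiguity. I therefore expect no substantive obstacle: the entire content is the type-matching above, together with the two structural observations that forming $\graph(f)$ is exactly what the cartesian (copy) structure buys us, and that pinning down $M = A$ is precisely the act of reifying the lens's otherwise-implicit residual.
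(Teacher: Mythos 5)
Your proposal is correct and matches the paper's treatment: the paper offers no separate proof of this proposition (deferring the full isomorphism to the cited literature), because its entire content is exactly the type-checking you perform --- that $\graph(f) : A \to A \times B$ and $f' : A \times B' \to A'$ assemble into a valid coend representative with $M = A$, and that the unquotiented source leaves nothing further to verify.
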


This finally gives us justification for the choice of the graphical language
used to draw lenses, where lenses were previously drawn
in their suggestive optic representation.
Just as in Figure \ref{fig:lens}, we see that a) the residual of the resulting optic is set to $A$, and b) the input $A$ is copied before being sent down as the residual.
Conversely, starting from an optic we can always erase the residual, and
``normalise'' the optic into its lens representation.

\begin{proposition}[Cartesian optics $\to$ cartesian lenses]
  \label{prop:optic_as_lens}
  For a cartesian category $\cC$, and every pair of objects $\prb{A}{A'}$ and
  $\prb{B}{B'}$ there is a function $E$ erasing the residual of a lens defined as
  \begin{align*}
    \Optic(\cC)\OpticHom{A}{A'}{B}{B'} &\xrightarrow{E} \CartLens(\cC)\OpticHom{A}{A'}{B}{B'}\\
    (\M, \fw, \bw) &\mapsto \prb{\fw \comp \pi_2}{((\fw \comp \pi_1) \times B') \comp \bw}
  \end{align*}
\end{proposition}

\begin{remark}
  \label{rem:residual_morphism_nontrivial}
The proof that $R \comp E = \id$ is trivial, but going the other way it isn't.
Showing that $E \comp R = \id$ involves showing $(\M, \fw, \bw)$ is
equivalent to $(A, \graph(\fw \comp \pi_2), ((\fw \comp \pi_1) \times B') \comp
\bw)$ which involves exhibiting a non-trivial witness for the equivalence: the
residual morphism $\fw \comp \pi_1 : A \to M$.
\end{remark}


\subsection{How do optics compose?}

As previously suggested, optic composition has a different definition than lens composition.

\begin{definition}[Optic composition, {\cite[page 5.]{CategoriesOfOptics}}]
  Consider two optics:\\
  \begin{center}
    $\prb{A}{A'} \xrightarrow{(\M_1, \fw_1, \bw_1)} \prb{B}{B'}$ and
    $\prb{B}{B'}
    \xrightarrow{(\M_2, \fw_2, \bw_2)} \prb{C}{C'}$.\\
  \end{center}
  We define their composite $(\M, \fw, \bw)$ as
  \begin{align}
    \M  &\coloneqq \M_1 \otimes \M_2 \label{eq:optic_comp_res}\\
  \fw &\coloneqq A \xrightarrow{\fw_1} M_1 \otimes B \xrightarrow{M_1 \otimes \fw_2} M_1 \otimes M_2 \otimes C\\
  \bw &\coloneqq M_1 \otimes M_2 \otimes C' \xrightarrow{M_1 \otimes \bw_2} M_1 \otimes B' \xrightarrow{\bw_1} A'
\end{align}

\end{definition}

This is essentially a composition of coparameterised maps in the forward pass,
and a composition of parameterised maps in the backward pass \cite{FibreOptics}.
The above symbolic description has a simple pictorial one: we simply draw a box
around the individual optics (Figure \ref{fig:optic_composition_box}).\footnote{We invite the reader to also have a look at the \emph{animation} of
  the optic composition, available at the author's blog post
  \href{https://www.brunogavranovic.com/posts/2022-02-10-optics-vs-lenses-operationally.html}{Optics
    vs. Lenses, Operationally}.}

\begin{figure}[H]
  \centering
  \includegraphics[width=.7\textwidth]{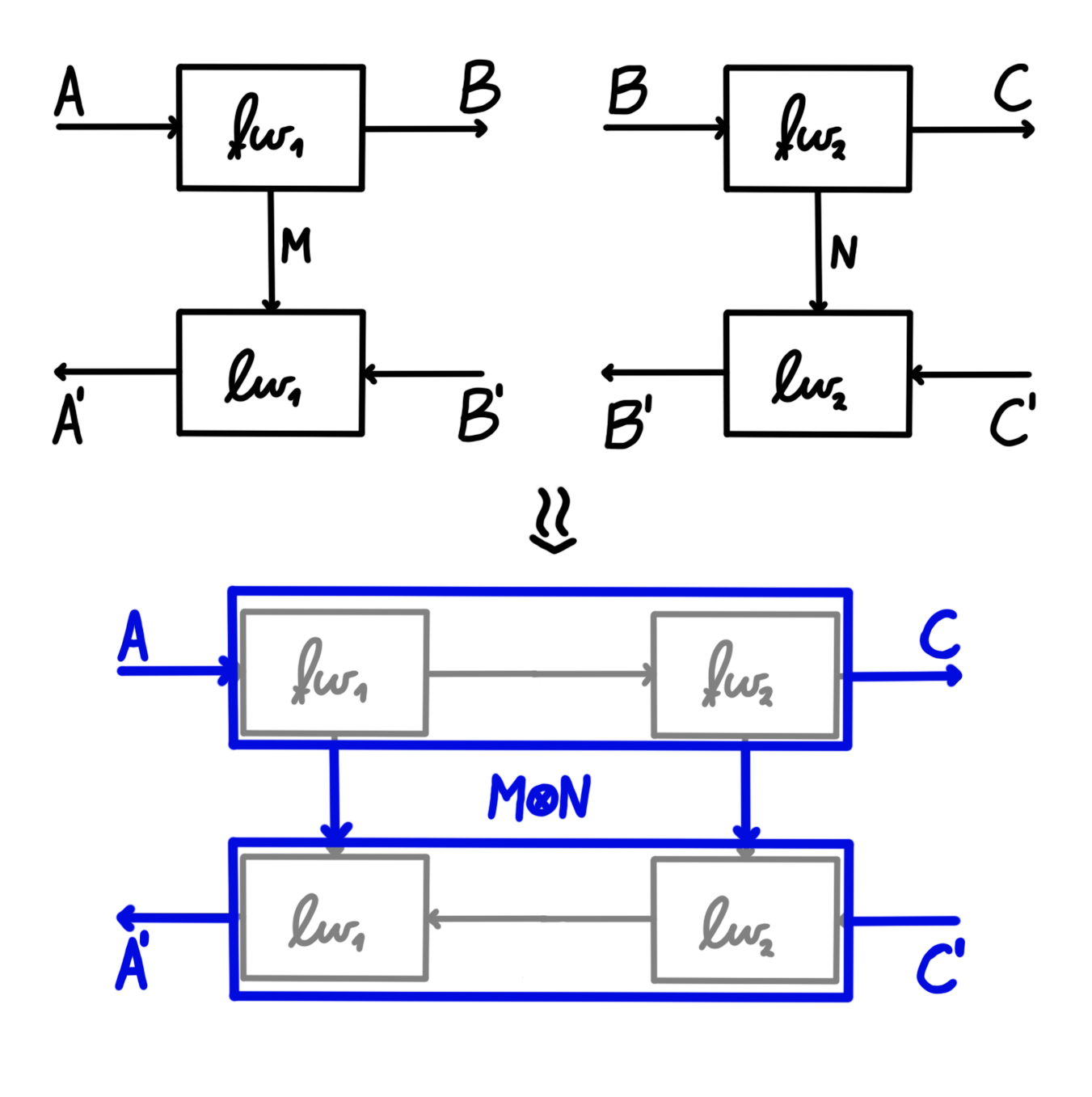}
  \caption{Graphical depiction of composition of two optics.}
  \label{fig:optic_composition_box}
\end{figure}


As originally hinted, optic composition implements a different space-time
tradeoff than lenses.
We notice that the newfound liberty of choosing the type of internal
state on a per-optic basis allows the composition of two optics with residuals
$\M_1$  and $\M_2$ to pick the product $\M_1 \otimes \M_2$ as its internal
state.
This allows optics to break down the problem of saving intermediate state into
smaller pieces: each optics takes care of storing their own data.
In turn, this removes the need to recompute any information, at the expense of
needing more memory.
This tradeoff becomes tricky if memory is a limited resource, as composing optics in a
sequence, causes their residuals to be composed in parallel (Eq.
\ref{eq:optic_comp_res}).
For cartesian optics, this means that the longer our chain of composition is, the more memory we need, something that is not true for lenses.

Now we have seen two different ways to compose these bidirectional gadgets.
Can we formally establish a categorical connection?

\subsection{Two ways to compose?}
\label{subsec:two_ways_to_compose}

The following point motivates the rest of this paper. Say we start with two lenses:
  $\prb{A}{A'} \xrightarrow{\prb{f}{f'}} \prb{B}{B'}$ and $\prb{B}{B'} \xrightarrow{\prb{g}{g'}} \prb{C}{C'}$.
There are two ways to obtain a composite optic, shown in Figure \ref{fig:lens_optic_comp}.
We can either compose them as lenses, and then turn this composition into an optic, or we can first turn these lenses into optics, and then compose the optics.

\begin{figure}[H]
  \centering
  \includegraphics[width=\textwidth]{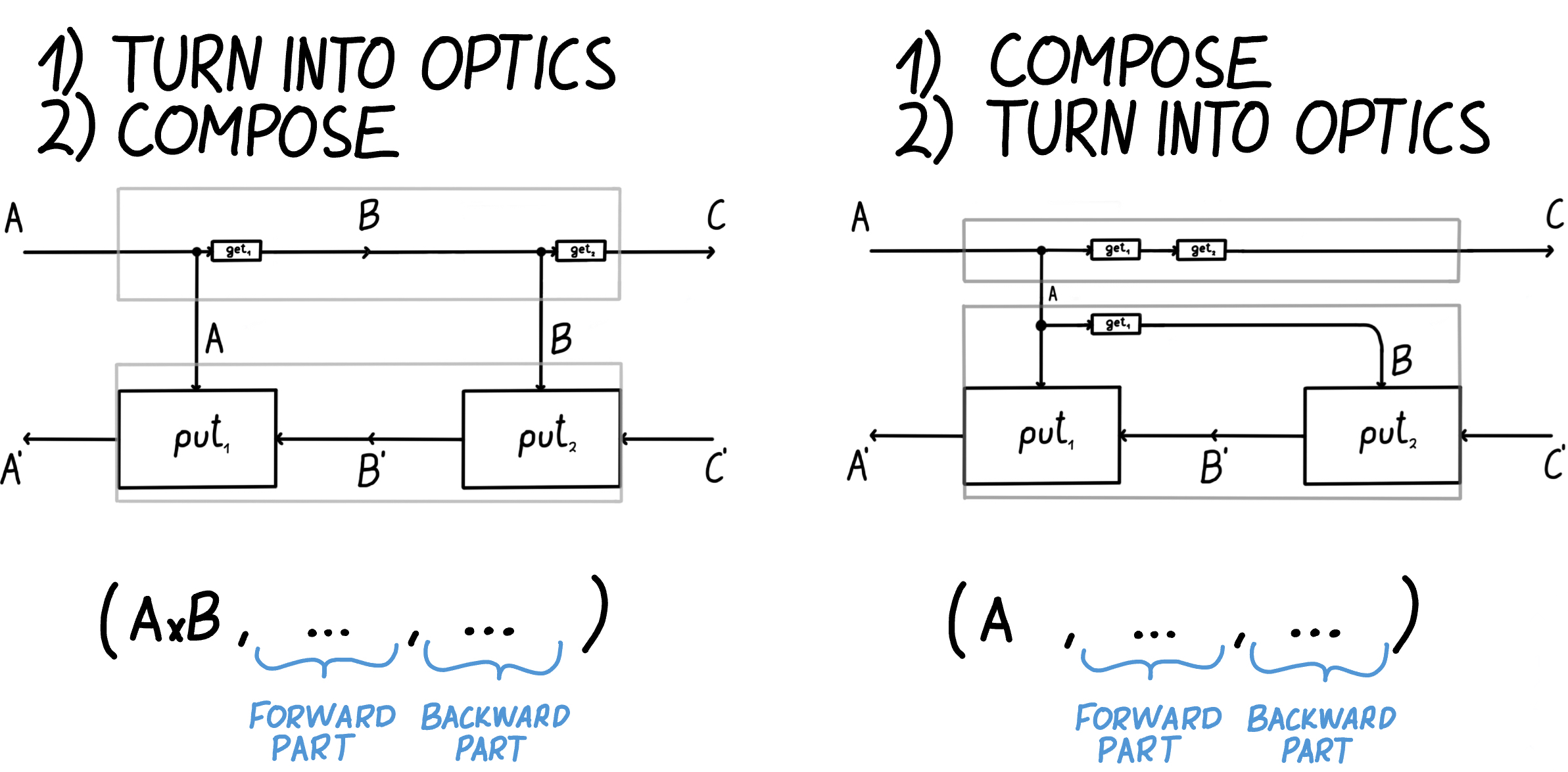}
  \caption[figurelens_optic_comp]{How to turn two composable lenses into an optic? Either by separately turning these lenses into optics and then composing them (left), or
    composing them and then turning the result into an optic (right). Whether these
    are equivalent depends on whether we're taking the denotational (extrinsic)
    or operational (intrisic) point of view.\footnotemark}
  \label{fig:lens_optic_comp}
\end{figure}

\footnotetext{In the figure we omit the cumbersome symbolic description of forward and backward maps of these optics; they can be found in the definition of the oplaxator in Thm. \ref{thm:oplax_embedding}}
%

If we first turn them into optics, and then compose, the result is an optic
whose residual is of type $A \times B$. When implemented,  this optic requires more memory, but less time, as it reuses computation.
Alternatively, if we first compose them and then turn the resulting lens
into an optic, we obtain an optic whose residual is the equal to the type of top-left input $A$.
This optic requires less memory, but more time. 
The question that motivates the rest of this paper is: \textbf{are these optics equivalent?}

  
If we look at the existing categorical framework, we find that the answer is
yes.
This can be seen in a few ways. The most straightforward one is to notice that
asking whether these results are equivalent is asking precisely if the embedding $\CartLens(\cC) \to \Optic(\cC)$ preserves composition (i.e. whether it's a 1-functor).
It's been shown in Prop. \ref{prop:lenseqoptic}) that the answer is yes.
Another way to show this is to exhibit a witness for this equivalence, and
indeed we can: it's the reparameterisation $\graph(f) : A \to A \times
B$.\footnote{Observe that it is crucial the underlying category $\cC$ is
  cartesian, as to prove that the diagrams in Eq. \ref{eq:optic_equiv_diagrams}
  commute we need to slide the $\view$ map through the copy. This will be
  elaborated in detail in Remark \ref{rem:rewrite}.}

We now observe that we have obtained \emph{an} answer to the bolded question,
but not \emph{the} answer.
We've obtained an answer of a particular \emph{denotational} nature.
This answer assumes that we're observing these optics \emph{from the outside},
therefore ignoring any matter of their internal setup.
As there is no way to observe these optics' internal state, there is no way for us to
make a distinction between them.
While this is a valid reference frame, it is not the only one.

In a modern, software oriented world, we're not merely observing these optics
from the ouside in.
We're building them from the inside out. We're choosing their particular
internal states, and it is nature that's actually making a distinction between
them -- by only sometimes only computing the answer if we've chosen the efficient representation for our purposes.

As current categorical framework doesn't have a high-enough resolution to
formally capture these differences, there is a rising need to provide one -- one
that is able to make a distinction between denotationally equivalent, but
operationally different kinds of optics.

In the rest of this paper, we will see how this can be done by using higher
category theory.


	\section{Categorical interlude: (1-) vs. oplax colimits}

In Figure \ref{fig:lens_optic_comp} we have seen that the current coend definition
of optics identifies a lot of information that we would like
to explicitly keep track of.
This happens often in mathematics, where colimits and traditional quotients
identify ``too much'', leading mathematicians to study more refined versions
thereof.
In this interlude we explicitly show how to do this. We first show how to add an extra
level of fidelity by computing \emph{oplax} colimits instead of colimits, and then
showing that the same idea applies to coends, as they're special kind of colimits.

We start out with the well-known monoidal adjunction
\[\begin{tikzcd}[row sep=2ex]
    & {} \\
    \Set && \Cat \\
    & {}
    \arrow["\discr"', curve={height=12pt}, from=2-1, to=2-3]
    \arrow["{\pi_0}"', curve={height=12pt}, from=2-3, to=2-1]
    \arrow["\dashv"{anchor=center, rotate=-90}, draw=none, from=1-2, to=3-2]
  \end{tikzcd}\]

between $\Set$ and $\Cat$, where both categories are endowed with the cartesian product as the monoidal one.\footnote{We note that the counit of this adjunction is the identity natural transformation, i.e. $\pi_0(\discr(X)) = X$ for every set $X$, making $\Set$ a reflective subcategory of $\Cat$.}
The right adjoint functor $\discr$ sends a set to a discrete category, and the left adjoint $\pi_0$ sends a category to its set of connected
components.\footnote{Recall that the set of connected components of a category
  is a quotient set identifying any two objects connected by a sequence of
  arrows, where we ignore their direction.} \footnote{The
  components of the unit of this adjunction are functors $\eta_\cC : \cC \to
  \discr(\pi_0(\cC))$ with the interesting characterisation that they send every morphism to identity.}
We show that this adjunction is instrumental in mediating the connection between
1-colimits and (op)lax colimits, latter of which provide an extra level of
fidelity necessary for our purposes.
This guides us in redefining the hom-set of optics
to a hom-\emph{category} of optics, and the category of optics to a
\emph{2-}category of optics.

As colimits we're interested in are $\Set$-colimits, the situation is
straightforward. Colimits arise out of a canonical higher-dimensional version thereof: the connected components of the (op)lax colimit of the original functor.

\begin{lemma}
  Let $F : \cC \to \Set$ be a functor. Then there is an isomorphism
  \[
    \colim(F) \cong \pi_0(\colim_{\textsf{oplax}} (F \comp \discr))
  \]
  where $\pi_0 \dashv \discr$ is the adjunction between $\Set$ and $\Cat$.
\end{lemma}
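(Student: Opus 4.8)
The plan is to compute the oplax colimit appearing on the right-hand side explicitly as a Grothendieck construction, recognise it as the category of elements $\El(F)$, and then invoke the classical identity $\colim(F) \cong \pi_0(\El(F))$ valid for any $\Set$-valued functor.

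First I would unwind the oplax colimit of $G \coloneqq F \comp \discr \colon \cC \to \Cat$ through its standard presentation as the (covariant) Grothendieck construction. Its objects are pairs $(c, x)$ with $c$ an object of $\cC$ and $x$ an object of $G(c) = \discr(F(c))$, that is $x \in F(c)$; a morphism $(c, x) \to (c', x')$ is a pair consisting of $u \colon c \to c'$ in $\cC$ together with a comparison cell in the fibre $G(c')$ relating $G(u)(x)$ and $x'$. The decisive observation is that every fibre $G(c') = \discr(F(c'))$ is \emph{discrete}, so the only available comparison cells are identities; such a cell exists precisely when $G(u)(x) = x'$, i.e. when $F(u)(x) = x'$. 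This collapses the morphism data to this single equation, and as a bonus it renders the lax/oplax orientation irrelevant, since the direction of a comparison cell is immaterial when all such cells are identities. The category so obtained is exactly $\El(F)$: objects $(c, x)$ with $x \in F(c)$, and morphisms $u \colon c \to c'$ with $F(u)(x) = x'$.

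Second I would recall that for $F \colon \cC \to \Set$ the colimit is the set of connected components of its category of elements, $\colim(F) \cong \pi_0(\El(F))$. The cleanest verification compares explicit presentations: the set of objects of $\El(F)$ is $\coprod_{c} F(c)$, and passing to $\pi_0$ quotients this by the equivalence relation generated by $x \sim F(u)(x)$ for all $u \colon c \to c'$ and $x \in F(c)$ --- which is exactly the relation defining the $\Set$-colimit $\bigl(\coprod_{c} F(c)\bigr)/{\sim}$. Chaining the two identifications gives $\colim(F) \cong \pi_0(\El(F)) \cong \pi_0(\colim_{\textsf{oplax}}(F \comp \discr))$, as claimed.

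The main obstacle --- really the only step needing genuine care --- is the first one: pinning down the chosen universal property of the oplax colimit, matching it against the Grothendieck construction, and confirming that discreteness of the fibres degenerates the comparison $2$-cells into the defining equations of $\El(F)$. A tempting shortcut is the abstract route via the adjunction: since the counit of $\pi_0 \dashv \discr$ is the identity we have $\discr \comp \pi_0 = \id$ on $\Set$, hence $G \comp \pi_0 = (F \comp \discr) \comp \pi_0 = F$, so the statement would follow from a preservation result of the form $\pi_0(\colim_{\textsf{oplax}} G) \cong \colim(G \comp \pi_0)$. However, justifying that $\pi_0$ carries oplax $2$-colimits in $\Cat$ to ordinary $1$-colimits in $\Set$ is itself most transparently argued through the same Grothendieck-construction description, so I expect the concrete computation to remain the most economical path.
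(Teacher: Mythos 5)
Your proposal is correct and follows essentially the same route the paper sketches: identify the oplax colimit of $F \comp \discr$ with the Grothendieck construction, observe that discreteness of the fibres collapses it to the category of elements $\El(F)$, and then invoke the classical formula $\colim(F) \cong \pi_0(\El(F))$ (the paper cites this as Theorem 6.37 of \emph{Seven Sketches}). Your explicit verification that the comparison $2$-cells degenerate to the equations $F(u)(x) = x'$ is a careful filling-in of a step the paper leaves implicit.
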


This is precisely the well-known general formula for computing colimits in
$\Set$ as described in \cite[Theorem 6.37.]{SevenSketches} where the
equivalence relation described therein is the one arising as the image of
$\pi_0$.

If we are interested in having the equivalence relation be explicit as
higher-categorical cells, all we have to do is compute the oplax colimit instead
-- which corresponds to taking the Grothendieck construction of our functor.
This is precisely what we set out to do with the notion of a coend in the
formulation of optics (note that in Remark \ref{rem:eq_symmetric} we've lost track of directionality of reparameterisation morphisms, precisely because connected components ignore directionality).


%

One last thing remains: that is to recast the coend as a colimit:

\begin{restatable}{proposition}{CoendsAreColimits}
  Let $F : \cC^\op \times \cC \to \Set$ be a functor. Then we have that
  \[
    \int^X F(X, X) \cong \colim (\pi' \comp F)
  \]
  where $\pi' : \tw(\cC)^\op \to \cC^\op \times \cC \coloneqq (m \xrightarrow{r}
  n) \mapsto (n, m)$.\footnote{Equivalently, one may say that the coend of $F$ is the colimit of $F$ \emph{weighted by} the
    $\Hom_\cC$ functor. This is precisely what this proposition states: that the
  category of elements ``absorbs'' weights.}
\end{restatable}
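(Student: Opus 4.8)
The plan is to identify both sides as colimits in $\Set$ and to compare their universal properties directly, showing that a cocone under $\pi' \comp F$ is literally the same datum as a cowedge for $F$. This is the conceptual content flagged in the footnote: the twisted arrow category $\tw(\cC)$ is the category of elements $\El(\Hom_\cC)$ of the weight $\Hom_\cC : \cC^\op \times \cC \to \Set$ (an object is a morphism $f : X \to Y$, i.e.\ an element of $\Hom_\cC(X,Y)$, and a morphism is exactly a reindexing $b \comp f \comp a$), and a $\Set$-valued weighted colimit is always a conical colimit over the category of elements of its weight, while $\int^X F(X,X)$ is precisely $F$ weighted by $\Hom_\cC$. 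Rather than invoke the weighted-colimit machinery, I would make the comparison self-contained.

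First I would recall the two universal properties. The coend is the universal cowedge: an object $W$ with maps $\omega_X : F(X,X) \to W$ such that for every $f : X \to Y$ the dinaturality equation $\omega_X \comp F(f,\id_X) = \omega_Y \comp F(\id_Y, f)$ holds as maps out of $F(Y,X)$. The colimit $\colim(\pi' \comp F)$ is the universal cocone: an object $W$ with maps $\lambda_r : F(n,m) \to W$, one for each object $(r : m \to n)$ of $\tw(\cC)$, compatible with all morphisms of $\tw(\cC)^\op$. I would exhibit, for each $W$, a bijection between cowedges and cocones with apex $W$, natural in $W$; uniqueness of representing objects then yields the isomorphism.

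The device driving the bijection is that every object $r : m \to n$ of $\tw(\cC)$ is canonically linked to two identity objects: the pairs $(\id_m, r)$ and $(r, \id_n)$ are morphisms $\id_m \to r$ and $\id_n \to r$ in $\tw(\cC)$ (each satisfies the twisted relation on the nose), hence morphisms $r \to \id_m$ and $r \to \id_n$ in $\tw(\cC)^\op$, and under $\pi' \comp F$ they become exactly $F(r,\id_m) : F(n,m) \to F(m,m)$ and $F(\id_n, r) : F(n,m) \to F(n,n)$. Given a cowedge $\omega$, I would set $\lambda_r := \omega_m \comp F(r,\id_m)$, note that this equals $\omega_n \comp F(\id_n, r)$ precisely by dinaturality for $r$, and check that $\{\lambda_r\}$ is a cocone using functoriality of $F$ and the defining equation $b \comp s \comp a = r$ of a twisted morphism. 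Given a cocone $\lambda$, I would set $\omega_X := \lambda_{\id_X}$; the dinaturality of $\omega$ is immediate, since the cocone conditions for the two morphisms $f \to \id_X$ and $f \to \id_Y$ both compute $\lambda_f$, forcing $\omega_X \comp F(f,\id_X) = \lambda_f = \omega_Y \comp F(\id_Y, f)$. The two assignments are mutually inverse: in one direction $\lambda_{\id_X} = \omega_X \comp F(\id_X,\id_X) = \omega_X$, and in the other $\lambda_{\id_m} \comp F(r,\id_m) = \lambda_r$ is exactly the cocone compatibility for the canonical morphism $r \to \id_m$ of $\tw(\cC)^\op$.

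The main obstacle is entirely bookkeeping: fixing the conventions for $\tw(\cC)$ and for the action of $\pi'$ on morphisms so that all variances align, and verifying cocone compatibility for a \emph{general} morphism of $\tw(\cC)^\op$, not merely those out of identities, where the equation $b \comp s \comp a = r$ must be threaded through the functoriality of $F$ in both its contravariant and covariant slots simultaneously. Once the identification $\tw(\cC) \cong \El(\Hom_\cC)$ and the morphism part of $\pi'$ are pinned down at the outset, each of these verifications collapses to a one-line diagram chase, so I would state those conventions explicitly first and let the universal-property comparison finish the proof.
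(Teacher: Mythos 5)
Your proof is correct, but it takes a genuinely different route from the paper's. The paper proves this by a chain of formal dualities: it rewrites the coend $\int^C F(C,C)$ as the end $\int_C F^\op(C,C)$, invokes the standard presentation of ends as limits over the twisted arrow category $\tw(\cC^\op)$, dualises limits back to colimits, and finally applies the canonical isomorphism $\tw(\cC) \cong \tw(\cC^\op)$ to land on the stated indexing category. Your argument instead compares universal properties head-on: you exhibit a bijection, natural in the apex $W$, between cowedges for $F$ and cocones under $\pi' \comp F$, using the two canonical morphisms $\id_m \to r$ and $\id_n \to r$ in $\tw(\cC)$ to translate dinaturality into cocone compatibility and back. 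What the paper's route buys is brevity, at the cost of outsourcing the real content to the cited end-as-limit formula and to Prop.~\ref{prop:tw_iso}; what your route buys is self-containedness and an explicit view of where the dinaturality equation lives inside $\tw(\cC)$ (your reduction of the general cocone condition to the identity-object cases via $b \comp s \comp a = r$ does go through, and is essentially the proof of the end-as-limit formula that the paper leaves implicit). Two small points of hygiene: the paper writes composition in diagrammatic order, so your $\omega_X \comp F(f,\id_X)$ should read $F(f,\id_X) \comp \omega_X$ under that convention; and since you appeal to representability, you should note at the outset that both the coend and the colimit exist in $\Set$, so the natural bijection on cocone/cowedge data does determine an isomorphism of the two objects.
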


\begin{proof}
Appendix. 
\end{proof}

A higher-categorical version of this proposition can be shown to hold.

\begin{proposition}[Oplax coends are oplax colimits]
Let $\cC$ be a category and $F : \cC^{op} \times \cC \to \Cat$ an oplax functor. Then
  \[
    \sqint^C F(C, C) \simeq \colim_{\textsf{oplax}}(\pi' \comp F)
  \]
  
\end{proposition}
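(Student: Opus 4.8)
The plan is to lift the proof of the preceding proposition (that ordinary coends are colimits over the twisted arrow category) to the oplax, $\Cat$-valued setting, with the Grothendieck construction playing the role that the category of elements played there. The key structural input I would isolate first is the folklore fact that the oplax colimit of a $\Cat$-valued (oplax) functor $G : \cD \to \Cat$ is computed, up to equivalence, by the (covariant) Grothendieck construction $\int_{\cD} G$. This is the exact categorification of the statement used in the earlier lemma: for a $\Set$-valued functor the colimit is $\pi_0$ of the category of elements, and here, since we remain in $\Cat$, we simply do not apply $\pi_0$ and instead retain the mediating morphisms as honest 1-cells. Everything then reduces to showing that both sides of the claimed equivalence are this one Grothendieck construction.

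For the right-hand side, I would apply this fact to $G = \pi' \comp F : \tw(\cC)^\op \to \Cat$, so that $\colim_{\textsf{oplax}}(\pi' \comp F) \simeq \int_{\tw(\cC)^\op}(\pi' \comp F)$. Concretely, objects of this Grothendieck construction are pairs $\big((m \xrightarrow{r} n),\, x\big)$ with $x$ an object of $F(n,m)$, and morphisms package a morphism of $\tw(\cC)^\op$ with a morphism in the relevant fiber, the comparison being mediated by the oplaxators of $F$. This is the explicit description I would carry through the rest of the argument.

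For the left-hand side, I would unfold the oplax coend $\sqint^C F(C,C)$ along the lines indicated by the footnote to the 1-categorical proposition: just as an ordinary coend is the colimit weighted by $\Hom_\cC$, the oplax coend is the oplax colimit weighted by $\Hom_\cC$ (valued in discrete categories via $\discr$). I would then invoke the oplax analogue of the ``category of elements absorbs weights'' principle to rewrite this $\Hom$-weighted oplax colimit as an \emph{unweighted} oplax colimit over $\tw(\cC)^\op$, i.e. as the very same Grothendieck construction $\int_{\tw(\cC)^\op}(\pi' \comp F)$. Comparing the two computations yields the equivalence $\sqint^C F(C,C) \simeq \colim_{\textsf{oplax}}(\pi' \comp F)$, and the original 1-categorical statement is recovered by applying $\pi_0$ locally, exactly as in the earlier lemma, which collapses this Grothendieck construction to the set of connected components computing the ordinary coend.

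The step I expect to be the main obstacle is matching the variance and the lax-versus-oplax conventions so that the two constructions literally agree rather than differing by an unwanted opposite. One must check that the flavor of Grothendieck construction computing \emph{oplax} (not lax) colimits uses precisely the direction of structural 2-cells that the oplax coend $\sqint$ produces, so that the oplaxators of $F$ line up on both sides. Because $F$ is only an oplax functor — not strict or pseudo — the genuine work is in tracking its coherence cells through both computations and verifying that the comparison functor is an equivalence respecting them; the underlying object-level correspondence is essentially the same bookkeeping as in the 1-categorical proof.
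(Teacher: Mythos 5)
The paper does not actually carry out this proof: it declares the statement ``a routine, but a painstakingly tedious checking of the corresponding universal properties'' and omits it entirely. Your proposal therefore takes a genuinely different route from the (implicit) one in the paper: instead of verifying the two universal properties against each other directly, you factor both sides through a single object, the Grothendieck construction of $\pi' \comp F$ over $\tw(\cC)^\op$, using (i) the folklore identification of oplax colimits of $\Cat$-valued functors with the Grothendieck construction and (ii) an oplax analogue of ``the category of elements absorbs discrete weights,'' applied to the weight $\Hom_\cC$ with $\El(\Hom_\cC) = \tw(\cC)$. This is faithful to the footnote the paper attaches to the $1$-categorical version and buys a more conceptual argument from which the earlier $\Set$-valued lemma falls out by applying $\pi_0$ locally. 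What it costs is that the real content is relocated rather than removed: besides the lax/oplax variance matching you already flag, note that the Grothendieck construction over $\tw(\cC)^\op$ has an object for \emph{every arrow} of $\cC$, whereas the oplax coend (as instantiated in Definition \ref{def:homcat_optic}) has one object per object of $\cC$; the comparison is only an equivalence, exhibited by including the fibre over identity arrows and retracting onto it using the structure cells of $F$, and checking that this retraction is well defined for a merely oplax $F$ is essentially the tedious verification the paper is avoiding. Likewise, the weight-absorption principle for oplax colimits with $\Set$-valued weights is true but is itself a statement of comparable difficulty to the proposition, so it should be proved or cited rather than merely invoked. With those two inputs supplied, your argument is correct and arguably more informative than the omitted direct check.
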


This involves a routine, but a painstakingly tedious checking of the
corresponding universal properties which we thus omit.
With these two propositions in hand we can show that there is a following isomorphism

\[
 \int^C F(C, C) \cong \colim (\pi' \comp F) \cong \pi_0(\colim_{\textsf{oplax}}(\pi' \comp
 F \comp \discr)) \cong \pi_0(\sqint^C\discr(F(C, C)))
\]

motivating the definition of the hom-\emph{category} of optics as an oplax coend
in the following section.

\section{2-optics}
\label{sec:two_optics}

With the above ideas in mind, we begin to define the 2-category of optics.
Throughout this section, we fix a symmetric strict monoidal
category $\cC$.\footnote{This means we assume the associativity and unitality hold
  strictly, but not symmetry. This kind of a monoidal category is sometimes referred to as a \emph{permutative} category.}
We start by defining its hom-categories.

\begin{definition}
  \label{def:homcat_optic}
  We define the \emph{hom-category} of 2-optics $(A, A') \to (B, B')$  as the
  following oplax coend
  \[
    \TwoOptic(\cC)\OpticHom{A}{A'}{B}{B'} \coloneqq \sqint^{M : \cC} \cC(A, M \otimes B)
    \times \cC(M \otimes B', A')
  \]
  where $\cC$ is implicitly treated as a locally discrete 2-category.

  Explicitly, its objects are triples $(\M, \fw, \bw)$ (as in \ref{def:homset_optic}). A
  morphism $\omega_r : (\M_1, \fw_1, \bw_1) \to (\M_2, \fw_2, \bw_2)$ is given
  by a map $r : \M_1 \to \M_2$ such that the following diagrams commute:
  \begin{equation}
    \label{eq:twocell_optics}
  \begin{tikzcd}[row sep=3ex, column sep=4ex]
      A && {M \otimes B} && {M \otimes B'} && {A'} \\
      \\
      && {N \otimes B} && {N \otimes B'}
      \arrow["f", from=1-1, to=1-3]
      \arrow["{r \otimes B}", from=1-3, to=3-3]
      \arrow["g"', from=1-1, to=3-3]
      \arrow["{f^\sharp}", from=1-5, to=1-7]
      \arrow["{g^\sharp}"', from=3-5, to=1-7]
      \arrow["{r \otimes B'}"', from=1-5, to=3-5]
    \end{tikzcd}
  \end{equation}

  Morphisms of optics are subject to the following axioms: 
  \begin{itemize}
  \item $\omega_{\id_{\M_1}} = \id_{(\M_1, \fw_1, \bw_1)}$, and
  \item $\omega_{r \comp s} = \omega_r \comp \omega_s$ for any $r : \M_1 \to \M_2$ and $s : \M_2 \to \M_3$.
  \end{itemize}
  They tell us that a) an optic moprhism induced by the identity residual morphism
  is equal to the identity optic morphism morphism, and b) a composition of
  optic morphisms that are individually induced by residual morphisms is
  equal to the optic morphism induced by the composition of the
  aforementioned residual morphisms.
\end{definition}

\begin{remark}
Unlike with 1-optics (Remark \ref{rem:eq_symmetric}), morphisms of optics are
not quotiented out by an equivalence relation, but instead residual morphisms are refieid as explicit 2-cells.
\end{remark}

Directedness plays an important role here.
A 2-cell $\omega_r : (\M_1, \fw_1, \bw_1) \to (\M_2, \fw_2, \bw_2)$ can be interpreted operationally in a few ways:
\begin{itemize}
\item We say it \emph{moves the boundary down} from $\M_1$ to $\M_2$.
\item We say it \emph{moves the reparameterisation up} from the backward pass to the forward pass.
\item We think of the arrow as saying ``can be optimised to''. The idea is that
  the transport of the reparameterisation to the forward pass allows us us
  to statically simplify the resulting computation, potentially removing any
  redundancies.
\end{itemize}

We're finally in a position of being able to define the 2-category of optics.

\begin{definition}
We define the 2-category of optics $\TwoOptic(\cC)$ with the following data:
\begin{itemize}
\item Its objects are the same as those of $\Optic(\cC)$, i.e. pairs of objects
  in $\cC$;
\item The hom-category is defined as in Def. \ref{def:homcat_optic}, i.e.
  morphisms are optics and 2-cells are optic reparameterisations
\end{itemize}
\end{definition}

\begin{proof}
As this definition closely follows that of the 1-categorical $\Optic(\cC)$, all
we have to check is that this coherently behaves with respect to strictness.
For instance, composing three optics with residuals $\M_1, \M_2$, and $\M_3$
respectively yields an optic with residuals either $(\M_1 \otimes \M_2) \otimes \M_3$ or
$\M_1 \otimes (\M_2 \otimes \M_3)$.
But as our starting monoidal category has strict associators, these are equal.
Similar argument holds for unitality, making this a 2-category.\footnote{We
  observe that if our starting category was merely symmetric monoidal,
  $\TwoOptic(\cC)$ would be a bicategory, which is a headache we want to avoid.}
\end{proof}

This 2-categorical construction can always be turned back to the 1-categorical
one by locally quotienting things out.

\begin{proposition}
There is an isomorphism
  \[
    \pi_{0^*}(\TwoOptic (\cC)) \cong \Optic(\cC)
  \]
  where $\pi_{0^*} : \TwoCat \to \Cat$ is the enriched base change of the
  connected components functor $\pi_0$.
\end{proposition}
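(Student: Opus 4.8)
The plan is to recognise the statement as an instance of \emph{change of enrichment base} along the functor $\pi_0 \colon \Cat \to \Set$, and then to verify that this base change, applied to the specific 2-category $\TwoOptic(\cC)$, reproduces $\Optic(\cC)$ hom-category by hom-category. The first thing I would record is that $\pi_0$ preserves finite products: a connected component of a product category is a pair of connected components, so $\pi_0(\cD \times \cE) \cong \pi_0(\cD) \times \pi_0(\cE)$ naturally. Hence $\pi_0$ is a cartesian monoidal functor $(\Cat, \times) \to (\Set, \times)$, and change of base along it yields the 2-functor $\pi_{0^*} \colon \TwoCat \to \Cat$ which keeps objects fixed, replaces each hom-category $\cD(X,Y)$ by the set $\pi_0(\cD(X,Y))$, and induces composition through the canonical map $\pi_0(\cD(Y,Z)) \times \pi_0(\cD(X,Y)) \cong \pi_0\big(\cD(Y,Z) \times \cD(X,Y)\big) \to \pi_0(\cD(X,Z))$.

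With this in hand the proof splits into an objects part, a hom-sets part, and a functoriality part. On objects there is nothing to do: $\pi_{0^*}$ leaves objects unchanged, and the objects of $\TwoOptic(\cC)$ are by definition exactly those of $\Optic(\cC)$, namely pairs of objects of $\cC$. So I would take the comparison functor to be the identity on objects and concentrate on the hom level.

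For the hom-sets I would invoke the chain of isomorphisms established at the end of the categorical interlude, instantiated at the functor $F \colon \cC^\op \times \cC \to \Set$, $F(X, Y) = \cC(A, Y \otimes B) \times \cC(X \otimes B', A')$, whose coend is exactly $\Optic(\cC)\OpticHom{A}{A'}{B}{B'}$ and whose oplax coend is the hom-category $\TwoOptic(\cC)\OpticHom{A}{A'}{B}{B'}$ of Definition \ref{def:homcat_optic}. That chain gives $\pi_0(\sqint^M \discr F) \cong \int^M F$, i.e. $\pi_0$ of the hom-category of $\TwoOptic(\cC)$ is the hom-set of $\Optic(\cC)$. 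Concretely this says that the connected-components relation on triples $(\M, \fw, \bw)$ --- the smallest equivalence relation relating $(\M_1, \fw_1, \bw_1)$ and $(\M_2, \fw_2, \bw_2)$ whenever there is a reparameterisation $r$ as in \eqref{eq:twocell_optics} --- coincides with the coend relation of Definition \ref{def:homset_optic}; the only point needing care is symmetry, which is supplied by Remark \ref{rem:eq_symmetric} (connected components forget the direction of $r$, exactly as the coend does).

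The remaining, and genuinely load-bearing, step is functoriality: I must check that after applying $\pi_0$ the induced composition on connected components agrees on the nose with optic composition, and likewise for identities. Here I would use that $\TwoOptic(\cC)$ was defined so that composition of \emph{objects} of the hom-categories is given by literally the same formulas as composition in $\Optic(\cC)$, with residual $\M_1 \otimes \M_2$. Well-definedness on equivalence classes is then automatic from the functoriality of $\pi_0$ together with its product preservation: the composition functor of $\TwoOptic(\cC)$ descends through $\pi_0$ precisely to the map defining optic composition, and identities are handled identically. Since the comparison is identity-on-objects and a bijection on each hom-set compatible with composition and units, it is an isomorphism of categories, not merely an equivalence. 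I expect this last compatibility check --- tracing the composition functor through the product-preservation isomorphism of $\pi_0$ --- to be the main obstacle, since it is where the two \emph{a priori} different composition laws (the oplax-coend composition and the coend composition) must be shown to coincide; everything else is bookkeeping already discharged by the interlude.
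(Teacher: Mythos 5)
Your proposal is correct and follows essentially the same route as the paper, which simply observes that $\pi_{0^*}$ is identity-on-objects and acts by $\pi_0$ on hom-categories (relying on the chain of isomorphisms from the categorical interlude for the hom-set identification); you have merely spelled out the product-preservation of $\pi_0$ and the composition-compatibility check that the paper leaves implicit as ``straightforward.''
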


\begin{proof}
This is straightforward to show as $\pi_{0^*}$ is identity-on-objects, and on
the hom-category it's defined as the application of $\pi_0$.
\end{proof}

Having upgraded our optics to a 2-category, we ask whether the previously defined
equivalence between cartesian lenses and cartesian optics described in the
subsection \ref{subsec:lens_optic_iso} has a higher dimensional counterpart.
We will see that the answer is yes, and we proceed to unpack these much more involved constructions.

\subsection{Cartesian 2-optics}

In this subsection we assume the monoidal product of $\cC$ is given by the
cartesian one.
Recall that in Prop. \ref{prop:lenseqoptic} we have shown that there is a local isomorphisms between the hom-sets of lenses and optics.
As we've upgraded our optics to a 2-category, we might suspect that the
corresponding isomorphism is upgraded too.
We see that it is -- to an adjunction.

\begin{restatable}{theorem}{CartLocalAdjunction}
  \label{thm:cart_local_adjunction}
  There is an adjunction
  \[\begin{tikzcd}[ampersand replacement=\&, row sep=2ex]
      \& {} \\
      \\
      {\CartLens(\cC)\OpticHom{A}{A'}{B}{B'}} \&\& {\TwoOptic(\cC)\OpticHom{A}{A'}{B}{B'}} \\
      \\
      \& {}
      \arrow["R", curve={height=-30pt}, from=3-1, to=3-3]
      \arrow["E", curve={height=-30pt}, from=3-3, to=3-1]
      \arrow["\dashv"{anchor=center, rotate=-90}, draw=none, from=1-2, to=5-2]
    \end{tikzcd}\]
  given by
  \begin{enumerate}
  \item \emph{Residual reifier} $R$, the left adjoint functor which chooses a
    canonical residual for every lens:
    \[
      R(\prb{f}{f'}) \coloneqq (A, \graph(f), f')
    \]
    As the domain is a set, the action of $R$ on morphisms is trivial.
  \item \emph{Residual eraser} $E$, the right adjoint which normalises the optic to
    its cartesian representation:
    \[
      E((\M, \fw, \bw)) \coloneqq \prb{\fw \comp \pi_2}{(\fw \comp \pi_1) \times B' \comp \bw}
    \]
    As the codomain is a set, every morphism must be sent to the identity one.
  \item (Vacuously natural) identity transformation $\eta$, the unit of the adjunction.
  \item The natural transformation $\epsilon$, the counit of the adjunction
    whose component at each optic $(\M, \fw, \bw)$ is an optic morphism
    \[
      \epsilon_{(\M, \fw, \bw)} : R(E(\M, \fw, \bw)) \Rightarrow (\M, \fw, \bw)
    \]
    defined by the reparameterisation $\fw \comp \pi_1 : A \to \M$ in $\cC$.
  \end{enumerate}
\end{restatable}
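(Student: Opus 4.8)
The plan is to check, in order, that $R$ and $E$ are well-defined functors, that $\eta$ and $\epsilon$ are natural transformations of the stated form, and finally that the two triangle identities hold. Several of these are immediate and I would dispatch them first. Since the domain of $R$ is the discrete category $\CartLens(\cC)\OpticHom{A}{A'}{B}{B'}$, functoriality of $R$ is automatic, and $\eta$ (being the identity) is vacuously natural once I observe that $E \comp R = \id$ holds strictly on objects: indeed $R(\prb{f}{f'}) = (A, \graph(f), f')$, and applying $E$ returns $\prb{\graph(f) \comp \pi_2}{((\graph(f) \comp \pi_1) \times B') \comp f'}$, which collapses to $\prb{f}{f'}$ using the defining cartesian identities $\graph(f) \comp \pi_2 = f$ and $\graph(f) \comp \pi_1 = \id_A$.

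The first substantive step is functoriality of $E$. As the codomain of $E$ is discrete, this reduces to showing that $E$ identifies any two optics related by a reparameterisation: given a 2-cell $\omega_r : (\M_1, \fw_1, \bw_1) \to (\M_2, \fw_2, \bw_2)$ I must prove $E(\M_1, \fw_1, \bw_1) = E(\M_2, \fw_2, \bw_2)$. The plan is a diagram chase combining the two commuting triangles of Eq. \ref{eq:twocell_optics} with the projection identities $(r \otimes B) \comp \pi_2 = \pi_2$ and $(r \otimes B) \comp \pi_1 = \pi_1 \comp r$ available in a cartesian category. The forward triangle $\fw_1 \comp (r \otimes B) = \fw_2$ immediately gives equality of the $\view$-components, $\fw_2 \comp \pi_2 = \fw_1 \comp \pi_2$; for the $\upd$-components I would rewrite $\fw_2 \comp \pi_1 = \fw_1 \comp \pi_1 \comp r$ and absorb the residual $r$ using the backward triangle $(r \otimes B') \comp \bw_2 = \bw_1$, making the two $\upd$-maps coincide.

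Next I would establish that the counit is well defined and natural. For each optic its component $\epsilon_{(\M, \fw, \bw)}$ is the reparameterisation $r \coloneqq \fw \comp \pi_1 : A \to \M$ out of $R(E(\M, \fw, \bw)) = (A, \graph(\fw \comp \pi_2), ((\fw \comp \pi_1) \times B') \comp \bw)$, so I must verify it satisfies both triangles of Eq. \ref{eq:twocell_optics}. The backward triangle holds on the nose by the very definitions of $E$ and $R$, and the forward triangle $\graph(\fw \comp \pi_2) \comp ((\fw \comp \pi_1) \otimes B) = \fw$ is the cartesian computation of Remark \ref{rem:residual_morphism_nontrivial} (pairing the two projections of $\fw$ back into $\fw$). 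For naturality against a 2-cell $\omega_s : (\M_1, \fw_1, \bw_1) \to (\M_2, \fw_2, \bw_2)$, the square collapses because $E$ sends $\omega_s$ to an identity, leaving the single equation $\fw_2 \comp \pi_1 = \fw_1 \comp \pi_1 \comp s$, which follows from the forward triangle of $\omega_s$ together with the composition axiom $\omega_{r \comp s} = \omega_r \comp \omega_s$.

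The triangle identities are then short. Since $\eta$ is the identity, one identity reduces to $\epsilon_{R(\prb{f}{f'})} = \id$, which holds because its defining reparameterisation is $\graph(f) \comp \pi_1 = \id_A$ and $\omega_{\id} = \id$; the other reduces to $E(\epsilon_{(\M, \fw, \bw)}) = \id$, which holds because $E$ sends every 2-cell to an identity. I expect the main obstacle to be the functoriality of $E$ and, dually, the verification that the counit is genuinely a 2-cell: these are exactly the points where the cartesian structure is indispensable, requiring one to slide forward maps through copies and projections, and together they are the formal shadow of the fact that the strict equality $E \comp R = \id$ witnessed in Remark \ref{rem:residual_morphism_nontrivial} by $\fw \comp \pi_1$ has now been refined into a genuinely directed, non-invertible reparameterisation rather than an isomorphism.
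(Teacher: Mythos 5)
Your proposal is correct and follows essentially the same route as the paper's own proof: the same decomposition into well-definedness of $E$, well-definedness and naturality of the unit and counit, and the two triangle identities, with the same reductions (identity unit, $E$ sending all 2-cells to identities, and $\graph(f) \comp \pi_1 = \id_A$ collapsing the counit on reified lenses). The only divergence is that you establish functoriality of $E$ by a direct diagram chase through Eq.~\ref{eq:twocell_optics} and the projection identities, where the paper instead cites the 1-categorical equivalence of Prop.~\ref{prop:lenseqoptic}; your version is more self-contained and also spells out the forward-triangle verification for the counit that the paper only sketches.
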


\begin{proof}
Appendix.
\end{proof}

\begin{remark}
  As unit is the identity 2-cell (and thus an isomorphism), this means that
  $\CartLens(\cC)$ is locally a coreflective subcategory of $\TwoOptic(\cC)$. Even
  more, this is sometimes called a \emph{rali} or a \emph{lari} adjunction
  \cite[Def. 1.2.]{Lali} as the unit is the \emph{identity} 2-cell.
\end{remark}

This defines a local adjunction on hom-categories.
To show that there is some higher correspondence between the $\CartLens(\cC)$
and $\TwoOptic(\cC)$ that is not just local, we need to check whether we can use
it to define functors going both ways.
As our codomain is now a 2-category, we can reasonably expect a an (op)lax functor to appear. And having in mind the question posed in Subsection
\ref{subsec:two_ways_to_compose} we will see that our functor is indeed \emph{oplax},
as it doesn't preserve composition on the nose, instead distinguishing between the
aforementioned optics.

\begin{theorem}
  \label{thm:oplax_embedding}
There is an identity-on-objects, oplax
2-functor embedding the category of lenses into the 2-category of optics: 
\[
  \iota : \CartLens(\cC) \to \TwoOptic(\cC)
\]
defined as follows. Its action on hom-sets of $\CartLens(\cC)$ is defined in
Thm. \ref{thm:cart_local_adjunction}. Its oplaxator is the natural transformation $\delta$
\[\begin{tikzcd}
	{\CartLens(\cC)\OpticHom{A}{A'}{B}{B'} \times\CartLens(\cC)\OpticHom{B}{B'}{C}{C'}} && {\CartLens(\cC)\OpticHom{A}{A'}{C}{C'}} \\
	\\
	\\
	{\TwoOptic(\cC)\OpticHom{A}{A'}{B}{B'} \times\TwoOptic(\cC)\OpticHom{B}{B'}{C}{C'}} && {\TwoOptic(\cC)\OpticHom{A}{A'}{C}{C'}}
	\arrow["{\text{Compose}}", from=1-1, to=1-3]
	\arrow["{\substack{\text{Turn into}\\ \text{optics}}}", from=1-3, to=4-3]
	\arrow["{\substack{\text{Turn into}\\ \text{optics}}}"', from=1-1, to=4-1]
	\arrow["{\text{Compose}}"', from=4-1, to=4-3]
	\arrow["{\delta^\iota}"{description}, shorten <=20pt, shorten >=20pt, Rightarrow, from=1-3, to=4-1]
\end{tikzcd}\]

which to every pair of lenses $\prb{A}{A'} \xrightarrow{\prb{f}{f'}}
\prb{B}{B'}$ and $\prb{B}{B'} \xrightarrow{\prb{g}{g'}} \prb{C}{C'}$ assigns the
reparameterisation $\graph(f) : A \to A \times B$ between the corresponding
optics $\iota(\prb{f}{f'} \comp \prb{g}{g'})$ and $\iota(\prb{f}{f'}) \comp
\iota(\prb{g}{g'})$.
Its opunitor to each object $\prb{A}{A'}$ assigns a natural
transformation $\epsilon^\iota$ whose unique component 2-cell $\epsilon^\iota :
\iota(\id_{\tiny\prb{A}{A'}}) \Rightarrow \id_{\tiny{\iota\prb{A}{A'}}}$ is
given by the reparameterisation $!_A : A \to 1$.
\end{theorem}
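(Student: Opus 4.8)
The plan is to exhibit the three pieces of data of an oplax $2$-functor --- the action on hom-categories, the oplaxator $\delta^\iota$, and the opunitor $\epsilon^\iota$ --- and then verify that (i) the components of $\delta^\iota$ and $\epsilon^\iota$ are genuine $2$-cells of $\TwoOptic(\cC)$, (ii) $\delta^\iota$ is natural, and (iii) the associativity and unit coherence axioms hold. The action on objects is identity by fiat, and the action on each hom-category is the residual reifier $R$ of Thm.~\ref{thm:cart_local_adjunction}. Because $\CartLens(\cC)$ is a $1$-category viewed as a locally discrete $2$-category, each source hom-category is discrete, so $R$ is automatically a functor (a map out of a discrete category extends uniquely) and nothing needs checking on $2$-cells. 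For the same reason the naturality squares of $\delta^\iota$ are trivial: its source is a product of discrete hom-categories, so every such square is an identity.

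Next I would check that the claimed components are legitimate $2$-cells, i.e.\ that $\graph(f)$ and $!_A$ satisfy the commuting squares of Eq.~\ref{eq:twocell_optics}. Computing the two composites explicitly, $\iota(\prb{f}{f'}\comp\prb{g}{g'}) = (A, \graph(f\comp g), \upd)$ has residual $A$, while $\iota(\prb{f}{f'})\comp\iota(\prb{g}{g'}) = (A\times B,\ \graph(f)\comp(A\times\graph(g)),\ (A\times g')\comp f')$ has residual $A\times B$, and the candidate reparameterisation is $r = \graph(f):A\to A\times B$. The \emph{backward} square holds by construction: by the definition of lens composition the backward map of the composite lens is $(\graph(f)\times C')\comp(A\times g')\comp f'$, which is exactly $(r\otimes C')$ followed by the backward map of the optic composite. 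The \emph{forward} square, $\graph(f\comp g)\comp(\graph(f)\times C) = \graph(f)\comp(A\times\graph(g))$, is the step that genuinely uses cartesianness: both sides reduce to $a\mapsto(a, f(a), g(f(a)))$ by sliding the forward map through the copy, precisely the rewrite recorded in Remark~\ref{rem:rewrite}. The opunitor is verified the same way: for $r=!_A:A\to 1$ the forward square reads $\graph(\id_A)\comp(!_A\times A) = \id_A$ and the backward square reads $(!_A\times A')\comp\id_{A'} = \pi_2$, both immediate.

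With the data in hand, the remaining task is oplax coherence. Since the assignment $\omega_{(-)}$ is functorial in the residual morphism ($\omega_{r\comp s}=\omega_r\comp\omega_s$ and $\omega_{\id}=\id$) and whiskering a $2$-cell $\omega_r$ by an optic of residual $P$ tensors $r$ with $P$, every coherence equation reduces to an identity between reparameterisations in $\cC$. For associativity I would take three composable lenses with forward parts $f:A\to B$, $g:B\to C$, $h:C\to D$ and compute both composite $2$-cells: the route through $\iota(f\comp g)\comp\iota(h)$ gives $\graph(f\comp g)\comp(\graph(f)\times C)$, the route through $\iota(f)\comp\iota(g\comp h)$ gives $\graph(f)\comp(A\times\graph(g))$, and both evaluate to $a\mapsto(a,f(a),g(f(a)))$, agreeing once the strict associator identifies $(A\times B)\times C$ with $A\times(B\times C)$. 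For the two unit axioms, the left-unit composite collapses via $\graph(\id_A)\comp(!_A\times A)=\id_A$ and the right-unit composite via $\graph(f)\comp(A\times !_B)=\id_A$, so each yields $\omega_{\id_A}=\id$; here strictness of the monoidal unit is what lets the identity optic carry residual the unit object.

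I expect the main obstacle to be the well-definedness of the oplaxator as a $2$-cell --- concretely the commutativity of the \emph{forward} square, which is the only place the cartesian hypothesis is essential and which is the explicit shadow of the non-trivial witness flagged in Remark~\ref{rem:residual_morphism_nontrivial}. Everything downstream (naturality, associativity, unitality) is then bookkeeping that reduces, through functoriality of $\omega_{(-)}$ and monoidal strictness, to the graph-and-copy calculus above; the one subtlety to keep visible is that these reductions silently invoke the strict associators and unitors, which is exactly why the ambient category was assumed strict at the start of Section~\ref{sec:two_optics}.
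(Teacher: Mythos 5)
Your proposal is correct and follows essentially the same route as the paper's proof: the backward maps agree on the nose so the only non-trivial well-definedness check is the forward square $\graph(f\comp g)\comp(\graph(f)\times C)=\graph(f)\comp(A\times\graph(g))$, established by associativity and naturality of the copy map (the paper defers this to Remark~\ref{rem:rewrite}), with the opunitor and the oplax coherence axioms likewise reducing to the copy/delete calculus in the strict cartesian setting. Your write-up is somewhat more explicit than the paper's (spelling out the discreteness argument for naturality and the unit-axiom computations), but there is no difference in method.
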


\begin{proof}
We first need to prove that $\graph(f)$ is a well-defined reparameterisation
between $\iota(\prb{f}{f'} \comp \prb{g}{g'})$ and $\iota(\prb{f}{f'}) \comp
\iota(\prb{g}{g'})$.
These two unpack to two optics previously drawn in Figure \ref{fig:lens_optic_comp}:
  \begin{align*}
    \iota(\prb{f}{f'} \comp \prb{g}{g'}) \quad 	&\leadsto \quad (A, \textcolor{black}{\graph(f \comp g), (\graph(f) \times C') \comp (A \times g') \comp f'})\\
    \iota(\prb{f}{f'}) \comp \iota(\prb{g}{g'}) \quad &\leadsto \quad (A \times B, \textcolor{black}{\graph(f) \comp (A \times \graph(g)), (A \times g') \comp f'})
  \end{align*}

  Once reparameterised, we can see that the residuals and backward maps of these
  optics are equal on the nose.
  This leaves us with just proving that the forward parts are.
  We postpone this proof to Remark \ref{rem:rewrite} as it has special meaning
  in terms of rewrites.
  From the naturality of the delete map it is straightforward to prove that the opunitor reparameterisation $!_A$ is well-defined too.

  Lastly, what needs to be proven is that lax associativity and lax left and right unity
  are satisfied (\cite[Eq. 4.1.19 and 4.1.20]{TwoDimensionalCategories}).
  This becomes evident once the diagrams are properly unpacked, and simple
  axioms of a cartesian category (associativity and naturality of copy,
  interaction of copy and delete) are sufficient to prove it.
\end{proof}

\begin{remark}
  \label{rem:rewrite}
The attentive reader might have noticed a peculiarity regarding the two possible
ways of composing lenses into optics.
Namely, reparameterising the forward part $\graph(f \comp g)$ of the optic
$\iota(\prb{f}{f'} \comp \prb{g}{g'})$ with $graph(f) : A \to A \times B$ yields
the forward part $\graph(f \comp g) \comp (\graph(f) \times C)$.
To show that this morphism is indeed equal to the forward part $\graph(f) \comp
(A \times \graph(g))$ of the optic $\iota(\prb{f}{f'}) \comp
\iota(\prb{g}{g'})$ we need to exhibit an additional proof.

\begin{figure}[H]
  \centering
  \includegraphics[width=0.7\textwidth]{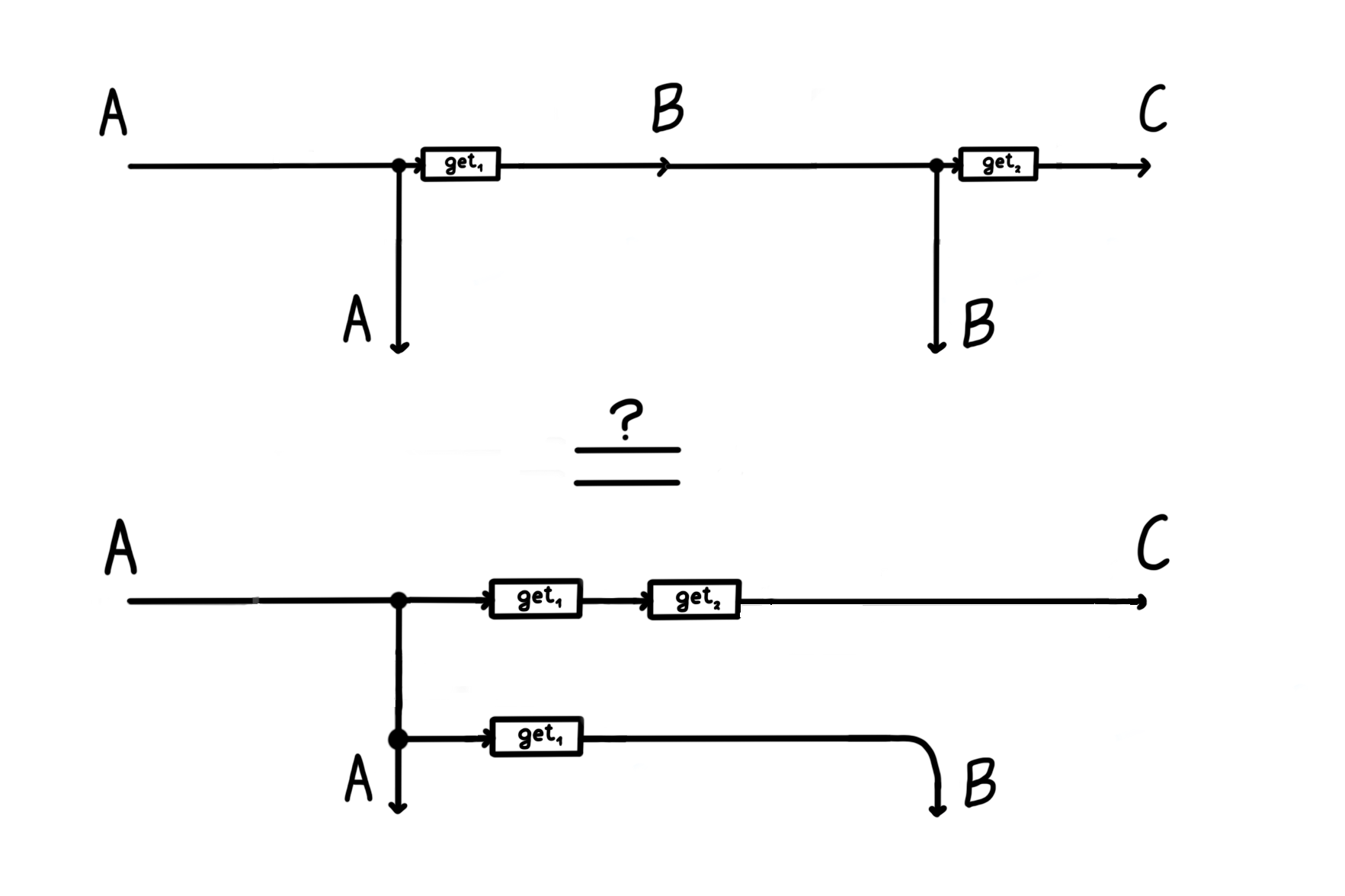}
  \caption{Two equivalent optic forward parts: $\graph(f) \comp (A \times
    \graph(g))$ (top) and $\graph(f \comp g) \comp (\graph(f) \times C)$ (bottom).}
  \label{fig:cart_rewrite}
\end{figure}

This proof is rather simple as the rewrite can be made by applying
associativity and naturality of the copy map.
But surprisingly, this proof is the main reason why optics can trade
space for time.
This is because we can compute the $\view_1$ once, and the copy the result instead of
copying the input and applying $\view_1$ twice, separately.
\end{remark}

\begin{remark}
  \label{rem:closed_lax}
  We do not provide a proof in this paper, but \emph{closed} lenses exhibit
  lax structure when embedded into 2-optics.
  However, they happen not to permit any additional rewrites as a result of
  this.
  This means that their operational characteristics end up the same as those of
  optics.
  This might be the reason why essentially closed lenses are what's used in
  \cite{SimpleAD} and many automatic differentiation libraries, as function
  spaces are a feature in most languages, but dependent types, which are
  required for optics, are not. 
\end{remark}

Having defined the embedding of lenses into optics, we can also go the other
way. This time, we have a strict 2-functor.

\begin{proposition}
There is a strict identity-on-objects 2-functor $\TwoOptic(\cC)
\xrightarrow{\pi_*} \CartLens(\cC)$ normalising an optic to its lens
representation whose action on hom-objects is defined in Thm \ref{thm:cart_local_adjunction}.
\end{proposition}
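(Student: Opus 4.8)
The plan is to define $\pi_*$ on objects as the identity (both 2-categories have the same objects, namely pairs $\prb{A}{A'}$ of objects of $\cC$), and on each hom-category by the residual eraser $E$ of Theorem \ref{thm:cart_local_adjunction}, regarding $\CartLens(\cC)$ as a locally discrete 2-category. Since $E$ is already exhibited there as the right adjoint functor $\TwoOptic(\cC)\OpticHom{A}{A'}{B}{B'} \to \CartLens(\cC)\OpticHom{A}{A'}{B}{B'}$ into a discrete category, the only genuinely new content is to check that these local functors assemble into a \emph{strict} 2-functor, i.e.\ that $E$ strictly preserves horizontal composition and horizontal identities. Crucially, there are no coherence 2-cells to supply, precisely because the codomain has only identity 2-cells.

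First I would recall why each local $E$ is well defined as a functor into a discrete category. Given a reparameterisation $\omega_r \colon (\M_1, \fw_1, \bw_1) \to (\M_2, \fw_2, \bw_2)$, Eq.~\ref{eq:twocell_optics} gives $\fw_1 \comp (r \times B) = \fw_2$ and $(r \times B') \comp \bw_2 = \bw_1$. Using $(r\times B)\comp\pi_2 = \pi_2$, $(r\times B)\comp\pi_1 = \pi_1\comp r$, and functoriality of $\times$, one checks that $E(\M_1,\fw_1,\bw_1)$ and $E(\M_2,\fw_2,\bw_2)$ have equal $\get$ and $\upd$ components. Hence every 2-cell is sent to an identity lens and $E$ factors through the local connected components, recovering the well-definedness already implicit in Prop.~\ref{prop:optic_as_lens}.

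Next I would verify strictness. For identities, the identity optic on $\prb{A}{A'}$ has residual the monoidal unit $I$, and a direct computation (using $I \times A = A$ strictly and $\fw \comp \pi_1 = {!_A}$) gives $E$ of it equal to the identity lens $\prb{\id_A}{\pi_2}$. For composition, given composable optics $(\M_1, \fw_1, \bw_1)$ and $(\M_2, \fw_2, \bw_2)$ with composite residual $\M_1 \times \M_2$, I would expand both $E$ of the composite and the lens composite of the two erased lenses and match the $\get$ and $\upd$ maps. The key observation is that the underlying triple of the 2-optic composite coincides with the 1-optic composite, so this reduces to strict functoriality of the 1-categorical eraser --- which is exactly the content of $E$ being inverse to $R$, and hence a functor, in the isomorphism of Prop.~\ref{prop:lenseqoptic}.

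I expect the composition-preservation check to be the main obstacle, though it is an obstacle of bookkeeping rather than of principle: one must slide projections past the copy maps $\graph(-)$ and invoke naturality of copy together with the copy/delete interaction in $\cC$, the same cartesian identities used in Remark~\ref{rem:rewrite}. Once equality of the $\get$ and $\upd$ maps holds on the nose, strictness follows immediately and $\pi_*$ is a strict identity-on-objects 2-functor.
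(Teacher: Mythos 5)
Your proposal is correct and follows exactly the route the paper intends: the paper states this proposition without proof, defining $\pi_*$ on hom-categories via the eraser $E$ of Theorem \ref{thm:cart_local_adjunction}, and your verification (2-cells collapse to identities by Eq. \ref{eq:twocell_optics}, and strictness of composition and identities reduces to the functoriality of the 1-categorical eraser from Prop. \ref{prop:lenseqoptic} plus routine cartesian bookkeeping) supplies precisely the checks the paper leaves implicit. No gaps.
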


\subsection{What world do these constructions live in?}

So far we've had an obstacle-free path retelling the story of optics in the
2-categorical language.
We have defined the 2-category of optics and showed that, in the cartesian case,
there's an oplax functor going to lenses, and a strict 2-functor going backwards.
As their lower-dimensional version formed an isomorphism, the natural next step
is to check whether these two weak functors form some kind of a weaker version of an isomorphism -- such as an equivalence or
an adjunction.
This necessitates finding a suitable ambient 2-category, and is where peculiarities start appearing.

We first recall all the data defined so far, and some relevant properties:

\begin{itemize}
\item Our 0-cells are 2-categories;
\item Our 1-cells are not all strict;
\item Our 1-cells $\iota$ and $\pi_*$ are \emph{identity-on-objects};
\item Thm. \ref{thm:cart_local_adjunction} establishes a local hom-category
  \emph{adjunction}, and not a mere isomorphism.
\end{itemize}

Looking at only the first three bullet points, we would be lead down a rabbit hole
that eventually proves to be a dead end.
As the second bullet point states that our 1-cells are not all strict, this rules out the 2-category $\TwoCat$ of 2-categories, 2-functors and lax transformations as a suitable candidate.
This means we need a 2-category whose 1-cells are lax functors. Famously, such a
2-category where 2-cells are any of the usual strict/pseudo/(op)lax/
transformations actually doesn't exist \cite{ProblemWithLaxFunctors}.
What does exist is a 2-category $\TwoCatic$ of 2-categories, lax functors and a
restricted kind of an oplax natural transformation called an \emph{icon}
(\cite{Icon}, \cite[Theorem 4.6.13.]{TwoDimensionalCategories}).
Icons are oplax natural transformations that are defined \emph{only when the
  underlying lax functors agree on objects}.
However, we see from the third bullet point that this is indeed the case for us!
We can indeed define these icons, and we will see that one of them is identity.
This makes one of the triangle identities commute automatically. However,
problems arise when when we check the other one -- we find that it doesn't commute.

This is because of the fourth bullet point: an adjunction requires a local
isomorphism, but we locally have an adjunction itself. This rules out the
possibility of an adjunction internal to a 2-category -- meaning we have to search for a weaker 2-categorical analogue thereof.
This leads us to consider the notion of a \emph{lax 2-adjunction} instead,
defined internal to a \emph{3-category}.
This is a concept weak enough in for our purposes, describing exactly the
setting of a local adjunction between hom-categories.

This is where our search, in its current form, stops.
Even though there is a special restricted kind of 2-category $\TwoCatic$ whose
2-cells are icons, there truly is no \emph{3-category} whose 0-cells are
2-categories and 1-cells are lax functors. Thus the question as we've posed it
indeed has no answers.

On the other hand, we might want to pose a different question. 
All of our constructions are restricted in some ways -- 1-cells are
identity on objects and 2-cells have identity components.
Perhaps our approach is failing because 0-cells are also restricted, but we're
not looking at them from the right perspective?

This is indeed the case. It turns out that there is an embedding $\TwoCatic \to
\Dbl$ where $\Dbl$ is the 3-category of double categories, lax functors, lax
transformations and modifications.
It sends a 2-category to a vertically discrete double category, a lax functor to
a lax functor between the corresponding double categories and an icon to
vertical transformations.

This means that our 2-category of optics should really be thought of as a double
category with only trivial vertical arrows.
As $\Dbl$ is a 3-category this means that it is a plausible setting for defining
a lax 2-adjunction.
This leaves us with the conjecture that the well-known isomorphism between
$\CartLens(\cC)$ and $\Optic(\cC)$ is a shadow of the lax 2-adjunction in the
3-category $\Dbl$ between $\CartLens(\cC)$ and $\TwoOptic(\cC)$ appropriately
thought of as double categories.
The proof of this conjecture is something we leave to future work.

	\bibliographystyle{alpha}
	\bibliography{bibliography}

  \appendix
  \section{Appendix}

\CartLocalAdjunction*
\begin{proof}
  We first prove well-definedness of the residual eraser $E$.
  As $E$ maps every optic morphism $r : o_1 \Rightarrow o_2$ to identity, we
  have to show that $Eo_1 = Eo_2$.
  But here we use the existing equivalence between lenses and the \emph{1-category} of optics (Prop. \ref{prop:lenseqoptic}) -- where these
lenses are equivalent if and only if the corresponding 1-optics are. That is
indeed true, as $r$ is a witness to such equivalence.

We now prove that both $\eta$ and $\epsilon$ is well-defined. For $\eta$ we
notice that starting from a lens, reifying its residual and then erasing it
lands us back \emph{exactly} with the starting lens. It's easy to see that this
assignment is vacuously natural as $\CartLens(\cC)\OpticHom{A}{A'}{B}{B'}$
is discrete.
For $\epsilon$, we can see that starting from an optic $(\M, \fw, \bw)$, erasing
the residual (yielding $\prb{\fw \comp \pi_2}{(f \comp \pi_1) \times B' \comp
  f'}$) and then reifying the residual results in an optic $(A, \graph(\fw \comp
\pi_2), (\fw \comp \pi_1) \times B' \comp \bw)$.
We need to check whether $\fw \comp \pi_1$ is a well-defined optic 2-cell:
\begin{equation}
  \label{eq:cart_counit_roundtrip}
  (A, \graph(\fw \comp \pi_2), (\fw \comp \pi_1) \times B' \comp \bw) \xRightarrow{\fw \comp \pi_1} (\M, \fw, \bw)
\end{equation}

But this is easy to verify. As optic 2-cells move reparameterisations backward,
the place where we have to look for a suitable reparameterisation of type $A \to
M$ is precisely in the backward pass of the left-hand side. And it is indeed there -
it's the morphism $\fw \comp \pi_1$.
We next need to prove that $\epsilon$ is natural. This means that for every
optic morphism $r : (\M_1, \fw_1, \bw_1) \to (\M_2, \fw_2, \bw_2)$ the equation
$\epsilon_{(\M_1, \fw_1, \bw_1)} \comp r = R(E(r)) \comp \epsilon_{(\M_2, \fw_2,
  \bw_2)}$ needs to hold.
As $R(E(r))$is identity, this reduces to showing that $\fw \comp \pi_1 \comp r =
\fw_2 \comp \pi_1$ which follows from Eq. \ref{eq:twocell_optics} (left).

Lastly, to prove this data indeed defines an adjunction, we need to show that
the following diagrams commute.

\[\begin{tikzcd}[ampersand replacement=\&]
    R \&\& {R \comp E \comp R} \&\& E \&\& {E \comp R \comp E} \\
    \\
    {} \&\& R \&\&\&\& E
    \arrow[Rightarrow, no head, from=1-1, to=3-3]
    \arrow["{R \comp \epsilon}", from=1-3, to=3-3]
    \arrow["{\eta \comp R}", Rightarrow, from=1-1, to=1-3]
    \arrow["{E \comp \eta}", Rightarrow, from=1-5, to=1-7]
    \arrow["{\epsilon \comp E}", Rightarrow, from=1-7, to=3-7]
    \arrow[Rightarrow, no head, from=1-5, to=3-7]
  \end{tikzcd}\]

As $\eta$ is the identity natural transformation, this means that $\eta \comp R = R$ and $E \comp \eta = E$, reducing the proofs to $R \comp \epsilon = \id_R$ and $\epsilon \comp E = \id_E$, respectively.
For the former, we need to show that applying the counit from Eq.
\ref{eq:cart_counit_roundtrip} on $(A, \graph(f), f')$ naturally the identity
morphism. This is easy to show as the underlying reparameterisation morphism
$\fw \comp \pi_1$ in for $\fw \coloneqq \graph(f)$ via Prop.
\ref{prop:graph_properties} reduces to identity.
For the latter, we need to show that applying $E$ to the same counit yields
identity. This is also straightforward as $E$ maps every morphism to identity.
\end{proof}

\begin{definition}[Ends as limits]
Let $\cC$ be a category. We call $\tw(\cC)$ the twisted arrow category of $\cC$
defined as the category of elements of its hom functor.
\[
  \tw(\cC) \coloneqq \El(\Hom_\cC)
\]

It comes equipped with the projection $\pi_{\cC} : \tw(\cC) \to \cC^\op \times \cC$.
\end{definition}

\begin{proposition}\label{prop:tw_iso}
There is a canonical isomorphism $\tw(\cC) \cong \tw(\cC^\op)$.
\end{proposition}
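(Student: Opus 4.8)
The plan is to unfold both twisted arrow categories explicitly as categories of elements, observe that their defining hom-functors differ only by the symmetry of the cartesian product, and then exhibit the resulting isomorphism as the identity-on-arrows functor that swaps the two legs of each twisted square.

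First I would unpack $\tw(\cC) = \El(\Hom_\cC)$ using the definition of the category of elements. Its objects are pairs $((A,B), f)$ with $f \in \cC(A,B)$, which I identify with arrows $f : A \to B$ of $\cC$. A morphism to $g : C \to D$ is a pair $(p,q)$, coming from a morphism $(A,B) \to (C,D)$ in $\cC^\op \times \cC$, so $p : C \to A$ and $q : B \to D$ in $\cC$, subject to the condition $\Hom_\cC(p,q)(f) = g$, i.e. $p \comp f \comp q = g$ in diagrammatic order. Next I would unpack $\tw(\cC^\op) = \El(\Hom_{\cC^\op})$, where $\Hom_{\cC^\op} : (\cC^\op)^\op \times \cC^\op \to \Set$, i.e. $\Hom_{\cC^\op} : \cC \times \cC^\op \to \Set$. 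Its objects are again just arrows of $\cC$, and a morphism from $\phi : Y \to X$ to $\phi' : Y' \to X'$ works out to a pair $(a,b)$ with $a : X \to X'$ and $b : Y' \to Y$ in $\cC$ such that $b \comp \phi \comp a = \phi'$.

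The decisive observation is that $\Hom_{\cC^\op} = \Hom_\cC \comp \sigma$, where $\sigma : \cC \times \cC^\op \to \cC^\op \times \cC$ is the symmetry isomorphism swapping the two factors: indeed $\Hom_\cC(\sigma(X,Y)) = \Hom_\cC(Y,X) = \cC(Y,X) = \cC^\op(X,Y) = \Hom_{\cC^\op}(X,Y)$. Since precomposition of a $\Set$-valued functor with an isomorphism of categories induces an isomorphism on categories of elements, $\El(\Hom_\cC \comp \sigma) \cong \El(\Hom_\cC)$, which is exactly $\tw(\cC^\op) \cong \tw(\cC)$. Unwinding what this isomorphism does, it is the identity on the underlying arrows and sends a morphism $(p,q)$ of $\tw(\cC)$ to $(q,p)$; the factorization condition $p \comp f \comp q = g$ then coincides with the factorization condition read with the two legs swapped, and since applying the swap twice is the identity, the functor is its own inverse, hence an isomorphism. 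This also makes precise the sense in which the isomorphism is \emph{canonical}.

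I expect the only real obstacle to be bookkeeping of variances: $\tw(\cC^\op)$ secretly involves two layers of dualization, and one has to rewrite every composite back in $\cC$ to confirm that the factorization condition genuinely matches the swapped legs rather than, say, producing an extra $\op$ and yielding $\tw(\cC)^\op$ instead. Routing the argument through the identity $\Hom_{\cC^\op} = \Hom_\cC \comp \sigma$ and the general lemma on $\El$ of a reindexed functor avoids most of this bookkeeping, reducing the verification to checking that $\sigma$ is an isomorphism of categories, which is immediate.
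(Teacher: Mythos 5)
Your argument is correct. The paper actually states Proposition \ref{prop:tw_iso} without proof, so there is nothing to compare against; your route through the identity $\Hom_{\cC^\op} = \sigma \comp \Hom_\cC$ (note that with the paper's diagrammatic $\comp$ the swap $\sigma$ should be written first) and the fact that reindexing a $\Set$-valued functor along an isomorphism of categories induces an isomorphism of categories of elements is a clean and valid way to supply the missing proof, and your explicit unwinding correctly confirms that one lands on $\tw(\cC)$ rather than $\tw(\cC)^\op$ under the paper's convention $\tw(\cC) = \El(\Hom_\cC)$.
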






\CoendsAreColimits*

\begin{proof}
  \begin{align*}
          &\int^C F(C, C)\\
    \cong & \quad (\text{Duality of ends and coends})\\
          &\int_C F^\op(C, C)\\
    \cong & \quad (\text{Ends as limits})\\
          &\lim (\tw(\cC^\op) \xrightarrow{\pi_{\cC^\op}} \cC \times \cC^\op \xrightarrow{F^\op} \Set^\op)\\
    \cong & \quad (\text{Duality of limits and colimits})\\
          & \colim (\tw(\cC^\op)^\op \xrightarrow{\pi_{\cC^\op}^\op} \cC^\op \times \cC \xrightarrow{F} \Set)\\
    \cong & \quad (\text{Prop. \ref{prop:tw_iso}})\\
          & \colim (\tw(\cC)^\op \xrightarrow{\pi_{\cC}'} \cC^\op \times \cC \xrightarrow{F} \Set)\\
  \end{align*}
  
\end{proof}

\begin{definition}\label{def:graph}
Let $f : A \to B$ be a morphism in cartesian category $\cC$. Then we denote by
$\graph(f)$ the composite
\[
  A \xrightarrow{\Delta_A} A \times A \xrightarrow{A \times f} A \times B
\]

This is called the \emph{graph of f}.\footnote{It's called the \emph{graph} of f because its image is a set of pairs $(a, f(a))$ which we can think of as points in a coordinate plane to be graphed.}
\end{definition}

\begin{proposition}\label{prop:graph_properties}
  Let $f : A \to B$  in a cartesian category $\cC$. Then we have that
  \begin{align*}
    \graph(f) \comp \pi_2 &= f, \quad \text{and}\\
    \graph(f) \comp \pi_1 &= \id_A
  \end{align*}
\end{proposition}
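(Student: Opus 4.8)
The plan is to prove both equations by directly unfolding the definition of $\graph(f)$ from Def.~\ref{def:graph} and then simplifying using only two defining properties of the cartesian structure: the behaviour of a product of morphisms under postcomposition with a projection, and the fact that the diagonal is a common section of the two projections. Working in diagrammatic order, recall that $\graph(f) = \Delta_A \comp (A \times f)$, so each claimed identity reduces to a short chain of rewrites.

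For the first equation, I would compute
\[
  \graph(f) \comp \pi_2 = \Delta_A \comp (A \times f) \comp \pi_2.
\]
The key step is the product--projection law $(A \times f) \comp \pi_2 = \pi_2 \comp f$, expressing that the second leg of $\id_A \times f$ is $f$ applied after the second projection $\pi_2 : A \times A \to A$. This turns the expression into $\Delta_A \comp \pi_2 \comp f$, and since the diagonal satisfies $\Delta_A \comp \pi_2 = \id_A$, we are left with $f$, as required.

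For the second equation the computation is entirely analogous, using instead the first-component laws $(A \times f) \comp \pi_1 = \pi_1$ (the first leg of $\id_A \times f$ is the identity) together with $\Delta_A \comp \pi_1 = \id_A$; these combine to give $\graph(f) \comp \pi_1 = \Delta_A \comp \pi_1 = \id_A$.

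There is no genuine obstacle here: both statements are immediate consequences of the universal property of the binary product, and the only thing to keep straight is the diagrammatic composition order together with which projection ($A \times A$ versus $A \times B$) is in play at each stage. The whole argument can in fact be read off a single commuting diagram assembled from $\Delta_A$, $A \times f$, and the two projections.
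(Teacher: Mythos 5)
Your proof is correct: unfolding $\graph(f) = \Delta_A \comp (A \times f)$ and applying the projection naturality laws $(A \times f) \comp \pi_2 = \pi_2 \comp f$, $(A \times f) \comp \pi_1 = \pi_1$ together with $\Delta_A \comp \pi_i = \id_A$ is exactly the intended argument. The paper states this proposition without proof, and your computation supplies precisely the routine verification that was omitted.
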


\end{document}